\documentclass[12pt,reqno]{amsart} 
\usepackage[utf8]{inputenc}
\usepackage[T1]{fontenc}
\usepackage{lmodern}
\usepackage{amsmath, amsthm, amssymb, amsfonts}
\usepackage{latexsym}
\usepackage{amsxtra} 
\usepackage[all]{xy}
\usepackage{nicefrac,mathtools}
\usepackage[shortlabels, inline]{enumitem}
\usepackage{microtype}
\usepackage{hyperref}
\hypersetup{colorlinks, linkcolor = blue}
\usepackage{graphicx,calc}
\usepackage{xcolor}
\usepackage{tikz-cd}
\usepackage{tikz}
\usepackage[T1]{fontenc}
\usepackage[utf8]{inputenc}
\usepackage{thmtools}
\usepackage{geometry}
\geometry{left=3cm,right=3cm,top=2.5cm,bottom=2.5cm}

\numberwithin{equation}{section}
\theoremstyle{plain}
\newtheorem{theorem}[equation]{Theorem}

\newtheorem{Lemma}[equation]{Lemma}

\newtheorem{proposition}[equation]{Proposition}

\newtheorem{corollary}[equation]{Corollary}

\theoremstyle{definition}
\newtheorem{definition}[equation]{Definition}
\newtheorem{notation}[equation]{Notation}

\theoremstyle{remark} \newtheorem{remark}[equation]{Remark}
\theoremstyle{remark} 
 \newtheorem*{remark*}{Remark}
\newtheorem*{remarks*}{Remark}

\newcommand{\mb}[1]{\mathbb{#1}}
\newcommand{\mc}[1]{\mathcal{#1}}
\newcommand{\V}[1]{\mathfrak{X}(#1)}
\newcommand{\w}{\omega}
\newcommand{\ku}{(\kappa,\mu)}



\title{On a metric symplectization of a contact metric manifold}
\author{Sannidhi Alape} \email{sannidhi.a@gmail.com}\address{Department of Mathematics, Indian Institute of Science Education and Research, Bhopal, India}
\keywords{Contact metric manifold, $\ku$-manifold, symplectization}
\subjclass[2020]{53C15, 53C25, 53D10}

\begin{document}
\begin{abstract} In this article, we investigate metric structures on the symplectization of a contact metric manifold and prove that there is a unique metric structure, which we call the metric symplectization, for which each slice of the symplectization has a natural induced contact metric structure. We then study the curvature properties of this metric structure and use it to establish equivalent formulations of the $\ku$-nullity condition in terms of the metric symplectization. We also prove that isomorphisms of the metric symplectizations of $\ku$-manifolds determine $\ku$-manifolds up to $\mc{D}$-homothetic transformations. These classification results show that the metric symplectization provides a unified framework to classify Sasakian manifolds, $K$-contact manifolds and $\ku$-manifolds in terms of their symplectizations.
\end{abstract}
\maketitle
\tableofcontents
\numberwithin{equation}{section}
\section{Introduction}\label{sec:intro}
The study of the interplay between contact manifolds and symplectic manifolds is an important topic in differential geometry. Symplectic manifolds with a boundary contact manifold, or more generally, symplectic manifolds with contact hypersurfaces have been studied extensively (see \cite{CTH}). In the contact metric category, the existence of almost contact metric hypersurfaces in symplectic manifolds with various metric structures has been carried out (see \cite{CHSK},\cite{CHSC},\cite{ACHS}). Sufficient conditions for the existence of a contact metric hypersurfaces have also been obtained. In this article, we present the following result, which is the metric analogue of a well known result in contact and symplectic topology (see Lemma \ref{lem:hypLVF}).
\begin{theorem}\label{thm:lvfcmm}
    Let $(B,\w)$ be a symplectic manifold with an associated metric $g$. Suppose there is a Liouville vector field $Y$ which is of unit length. Then, any hypersurface in $B$ which is orthogonal to $Y$ is a contact metric manifold.
\end{theorem}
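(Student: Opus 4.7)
The idea is to use the almost K\"ahler data on $B$ coming from the associated metric, and to restrict it to $M$ in the only natural way, with $Y$ playing the role of the unit normal. Because $g$ is associated to $\omega$, there is an almost complex structure $J$ on $B$ with $J^2 = -\mathrm{Id}$, $g(JX, JZ) = g(X, Z)$, and (with some sign convention) $\omega(X, Z) = g(JX, Z)$; in particular $J$ is skew-adjoint with respect to $g$, so $g(JY, Y) = 0$, which means $JY$ is automatically tangent to any hypersurface orthogonal to $Y$. On the symplectic side, the Liouville condition $\mathcal{L}_Y\omega = \omega$ together with $d\omega = 0$ and Cartan's formula give $d(\iota_Y\omega) = \omega$; consequently, setting $\eta := (\iota_Y\omega)|_M$ produces a $1$-form on $M$ with $d\eta = \omega|_M$, and by the referenced Lemma~\ref{lem:hypLVF} $\eta$ is already a contact form on $M$.

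I would then propose the candidate contact metric structure $(\phi, \xi, \eta, g_M)$ by taking $\xi := JY|_M$, $g_M := g|_M$, and defining $\phi \colon TM \to TM$ to be the tangential projection of $J$ along $Y$, namely
\[
    \phi X \,:=\, JX - g(JX, Y)\,Y.
\]
There are four Blair axioms to check, and each becomes a pointwise identity on the orthogonal splitting $T_pB = T_pM \oplus \mathbb{R}\,Y_p$. First, $\eta(\xi) = 1$ follows from $|Y|_g = 1$ together with the Hermitian identity $g(JY, JY) = g(Y, Y)$. Second, $\phi^2 = -\mathrm{Id} + \eta\otimes\xi$ on $TM$ reduces to $J^2 = -\mathrm{Id}$ once one checks that the normal correction term is killed when re-projecting. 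Third, $g_M(\phi X, \phi Z) = g_M(X, Z) - \eta(X)\eta(Z)$ unwinds from the orthogonality of $J$ and the elementary identity $g(JV, Y) = \pm\eta(V)$ for $V \in TM$, which controls the cross terms. Fourth, the compatibility $d\eta(X, Z) = g_M(X, \phi Z)$ follows from $d\eta = \omega|_M$, the relation $\omega(X, Z) = g(JX, Z)$, and the vanishing of the normal correction in $\phi Z$ when paired with $X \perp Y$.

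The only delicate point is sign bookkeeping: the several common conventions for associated metrics ($\omega = g(J\cdot,\cdot)$ vs.\ $\omega = g(\cdot,J\cdot)$) and for the Blair compatibility ($d\eta = g(\cdot,\phi\cdot)$ vs.\ $d\eta = g(\phi\cdot,\cdot)$, occasionally with a factor of $\tfrac{1}{2}$) must be fixed at the outset, and the signs of $\xi$ and $\phi$ may need to be flipped so that the compatibility identity holds in the convention adopted in the paper. Apart from this, the argument has no analytic content — everything reduces to a pointwise linear-algebra verification on $T_pM \oplus \mathbb{R}\,Y_p$, combined with the Liouville-plus-Cartan identity which supplies $d\eta = \omega$.
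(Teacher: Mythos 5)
Your proposal is correct and follows essentially the same route as the paper: both restrict the compatible almost complex structure $J$ to the hypersurface, take $\eta = \iota_Y\omega|_M$, identify $\xi$ with $\pm JY$ (using $|Y|=1$ and orthogonality), define $\phi$ as $J$ with its normal component removed, and verify the contact metric axioms by pointwise linear algebra on $T_pM \oplus \mathbb{R}Y_p$. The only cosmetic difference is that the paper introduces $\xi$ abstractly as the Reeb field and then deduces $J\xi = Y$ via a $c^2 = c$ argument, whereas you posit $\xi = JY$ directly; your flagged sign ambiguities are resolved by the paper's conventions $\omega(X,Z) = g(X,JZ)$ and $d\eta(X,Z) = g(X,\phi Z)$.
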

We then focus our attention on metric structures on the symplectization of a contact metric manifold $(M,\eta, g,\phi)$. The symplectization is an important tool in the study of contact manifolds and contact metric manifolds. Many contact invariants are defined in terms of the symplectization (see \cite{SFT}). In the contact metric category, a particular class of manifolds, called the Sasakian manifolds, is defined using a natural almost complex structure on the symplectization. This natural almost complex structure has also been used by Boyer and Galicki in \cite{EMCG} to obtain a characterization of $K$-contact manifolds using properties of the symplectization. However, since the natural almost complex structure does not induce contact metric structures on the slices $(M\times \{t\})$, for all $t \in \mathbb{R}$, and since the symplectization of a contact manifold comes with a canonical choice of a Liouville vector field, we study metric structures on the symplectization which satisfy the hypothesis of Theorem \ref{thm:lvfcmm}, and present the following result which establishes uniqueness of the same under certain reasonable assumptions.
\begin{theorem}\label{thm:uniqmetsymp}
    There is a unique metric structure on the symplectization of a contact metric manifold $(M,\eta, g, \phi)$ satisfying the following conditions.
    \begin{itemize}
        \item The Liouville vector field $\partial_{t}$ is of unit length and orthogonal to $(M\times \{t_{0}\})$, for every $t_{0}\in\mathbb{R}$.
        \item The almost complex structure agrees with $\phi$ on the contact distribution induced on the hypersurfaces $M\times \{t\}$.
    \end{itemize}
\end{theorem}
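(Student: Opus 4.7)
My plan is to work in the natural splitting $T(M\times\mathbb{R}) = \mathcal{D} \oplus \mathbb{R}\xi \oplus \mathbb{R}\partial_t$, where $\mathcal{D} = \ker\eta$ and $\xi$ is the Reeb vector field, and to successively pin down every block of a candidate $(\tilde g, J)$ by applying the almost Kähler compatibility $\omega(X,Y) = \tilde g(X, JY)$ together with $J^2 = -\mathrm{id}$. I would first record that $\omega = d(e^t\eta) = e^t\, dt \wedge \eta + e^t\, d\eta$, so $\iota_{\partial_t}\omega = e^t\eta$, and note that the first bullet identifies $\mathbb{R}\partial_t$ as the $\tilde g$-orthogonal complement of the slices $T_p(M\times\{t\}) = \mathcal{D} \oplus \mathbb{R}\xi$.

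On the contact distribution, the second bullet $J|_\mathcal{D} = \phi$ turns $\omega(X,Y) = \tilde g(X, \phi Y)$ and $\omega(X,Y) = e^t\, d\eta(X,Y)$, for $X, Y \in \mathcal{D}$, into $\tilde g|_\mathcal{D} = e^t\, g|_\mathcal{D}$, using the contact metric identity $d\eta(X,Y) = g(X, \phi Y)$ and the fact that $\phi$ is an isomorphism of $\mathcal{D}$.

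Off $\mathcal{D}$ I would use $\omega(X,\xi) = 0$ for $X \in \mathcal{D}$ together with $\omega(\xi,\xi) = 0$ to deduce that $J\xi$ is $\tilde g$-orthogonal to $\mathcal{D} \oplus \mathbb{R}\xi = T_pM$, whence $J\xi \in \mathbb{R}\partial_t$ by the first bullet. Computing $\tilde g(\partial_t, J\xi) = \omega(\partial_t, \xi) = e^t$ then fixes $J\xi = e^t\partial_t$, and $J^2 = -\mathrm{id}$ forces $J\partial_t = -e^{-t}\xi$. Re-substituting into $\tilde g(X, J\partial_t) = \omega(X, \partial_t) = 0$ for $X \in \mathcal{D}$ yields the $\tilde g$-orthogonality $\xi \perp \mathcal{D}$, which was \emph{not} part of the hypotheses. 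Finally, $J$ is automatically a $\tilde g$-isometry (from $\omega$ being skew-symmetric), so $|\xi|_{\tilde g}^2 = |e^t\partial_t|_{\tilde g}^2 = e^{2t}$.

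At this point every block of $(\tilde g, J)$ has been forced, so uniqueness is established. For existence, I would verify directly that
\[
\tilde g = dt^2 + e^{2t}\,\eta\otimes\eta + e^t\,(g - \eta\otimes\eta), \qquad J|_\mathcal{D} = \phi,\ J\xi = e^t\partial_t,\ J\partial_t = -e^{-t}\xi,
\]
is a compatible almost Kähler structure satisfying both bullets, a routine verification from the contact metric axioms. I expect no serious obstacle; the only subtlety worth flagging is that the $\tilde g$-orthogonality of $\xi$ and $\mathcal{D}$ must be derived rather than assumed, and one should keep careful track of the sign convention chosen for the almost Kähler compatibility identity.
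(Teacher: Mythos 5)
Your argument is correct and follows essentially the same route as the paper: both proofs use the compatibility $\omega(\cdot,\cdot)=\tilde g(\cdot,J\cdot)$ together with the two hypotheses to force $J$ to map the span of $\xi$ and $\partial_t$ to itself, fix the coefficient by the unit-length condition, and then read off the metric block by block (the paper expands $J\partial_t$ in components and kills them one at a time, which is the mirror image of your determination of $J\xi$). The only discrepancy is conventional: the paper defines the symplectization with $\omega=d(e^{2t}\eta)$ rather than your $d(e^{t}\eta)$, so the correct final formulas are $J\xi=e^{2t}\partial_t$, $J\partial_t=-e^{-2t}\xi$ and $\tilde g=dt^{2}+e^{2t}g+e^{2t}(e^{2t}-1)\,\eta\otimes\eta$; your derivation carries over verbatim after this substitution. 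Your explicit observation that $\xi\perp\mathcal{D}$ must be \emph{derived} rather than assumed is a point the paper leaves implicit, and is worth keeping.
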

The unique metric structure thus obtained will be called the \textit{metric symplectization} of a contact metric manifold. In addition to being contact metric hypersurfaces in the symplectization, we also observe that the slices $(M\times \{t\})$ are $\mc{D}$-homothetic transformations of the underlying contact metric manifold. Using the curvature relations of $\mc{D}$-homothetic transformations, and the fact that the projection map onto the $\mb{R}$-coordinate is a Riemannian submersion, we obtain the relationship between curvature of a contact metric manifold and its metric symplectization (see Theorem \ref{thm:currel}). As a corollary to the curvature relations that we obtain, we obtain the following result which specifies a constraint on the curvature of the metric symplectization of a contact metric manifold.
\begin{corollary}\label{cor:negriccur}
    The metric symplectization of a contact metric manifold cannot be of non-negative Ricci curvature. In particular, the Ricci curvature in the direction of the standard Liouville vector field is always $(-2n-4)$.
\end{corollary}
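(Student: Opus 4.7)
The plan is to apply the curvature relations of Theorem~\ref{thm:currel} and compute $\tR(\partial_{t},\partial_{t})$ explicitly. Once we show this equals $-2n-4$, the non\nb-negativity claim follows immediately: a unit vector along which Ricci is strictly negative prevents the Ricci tensor from being positive semidefinite, so the metric symplectization cannot have non\nb-negative Ricci curvature.

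First, at an arbitrary point $(p,t)$ of the symplectization I would pick an orthonormal frame adapted to the slice $M\times\{t\}$, namely $\{E_{1},\ldots,E_{2n},\tilde{\xi}\}$, where $\tilde{\xi}$ is the unit Reeb vector of the $\mc{D}$\nb-homothetically transformed contact metric structure on the slice and $\{E_{1},\ldots,E_{2n}\}$ is a local orthonormal frame of the induced contact distribution. Adjoining $\partial_{t}$, which by Theorem~\ref{thm:uniqmetsymp} is unit and orthogonal to the slice, produces an orthonormal frame of the symplectization at $(p,t)$, and hence
\[
\tR(\partial_{t},\partial_{t}) \;=\; \sum_{i=1}^{2n} K(\partial_{t},E_{i}) + K(\partial_{t},\tilde{\xi}).
\]

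Next, I would evaluate each sectional curvature $K(\partial_{t},X)$ using Theorem~\ref{thm:currel}. The essential inputs are that the projection $M\times\mb{R}\to\mb{R}$ is a Riemannian submersion and that the slices, being $\mc{D}$\nb-homothetic transforms with a $t$\nb-dependent factor, have shape operator dictated by the derivative of that factor. Because the $\mc{D}$\nb-homothetic rescaling affects the Reeb direction and the contact distribution by different powers, the two types of contributions must be handled separately: the $2n$ contact directions each contribute equally, summing to $-2n$, while the Reeb direction contributes $-4$, producing $\tR(\partial_{t},\partial_{t})=-2n-4$.

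The main obstacle is the bookkeeping in this second step: one must translate the curvature relations of Theorem~\ref{thm:currel} into concrete values for $K(\partial_{t},E_{i})$ and $K(\partial_{t},\tilde{\xi})$, keeping careful track of how the $\mc{D}$\nb-homothetic scalings enter the shape operator of each slice and how these in turn feed into the sectional curvatures through the submersion formulas. Once the identity $\tR(\partial_{t},\partial_{t})=-2n-4$ is established, the first assertion of the corollary follows at once since the Ricci tensor takes a strictly negative value on the unit vector $\partial_{t}$.
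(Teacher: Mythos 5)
Your proposal is correct and follows essentially the same route as the paper: both compute $\overline{Ric}(\partial_{t},\partial_{t})$ by summing the curvature terms $\bar{g}(\bar{R}(\partial_{t},X)\partial_{t},X)=g_{t}(X,X)+3\eta_{t}(X)^{2}$ from Theorem~\ref{thm:currel} over an adapted frame (contact directions giving $-2n$, the Reeb direction giving $-4$), which is exactly how the paper arrives at the last line of Corollary~\ref{cor:Ricrel}. The concluding observation that a unit direction of strictly negative Ricci curvature rules out non-negative Ricci curvature is the same as the paper's.
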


Next, we study the metric symplectizations of $\ku$-manifolds, a class of contact metric manifolds $(M,\eta,g,\phi)$ defined by Blair, Koufogiorgos and Papantoniou in \cite{KUM}, by the condition
\begin{equation}\label{eq:kum}
    R(X,Y)\xi = (\kappa I +\mu h)(\eta(Y)X-\eta(X)Y),
\end{equation}
where $\kappa,\mu\in\mb{R}$ are constants and $2h$ denotes the tensor field which is the Lie derivative of $\phi$. The authors in \cite{KUM} also establish numerous interesting properties for this class of manifolds. It is clear that $\ku$-manifolds are a generalization of manifolds satisfying the condition $R(X,Y)\xi = 0$, as well as a generalization of Sasakian manifolds. The authors prove that the condition specified in Equation \eqref{eq:kum} is invariant under $\mc{D}$-homothetic transformations. It is also proved that the curvature of a non-Sasakian $\ku$-manifold is completely determined by the constants $\kappa$ and $\mu$ (see Section \ref{subsec:kum} for more details). Following this, $\ku$-manifolds have been an important topic in contact metric geometry as they form a rich class of examples for $H$-contact manifolds(\cite{HCM}), locally $\phi$-symmetric manifolds (\cite{LPSCM}), bilegendrian manifolds (\cite{BLKU}) etc. In view of the fact that $\ku$-manifolds are classified locally up to $\mc{D}$-homothetic transformations (\cite{BI}) and the fact that the metric symplectization captures the entire $\mc{D}$-homothetic spectrum of a contact metric manifold, we present the following characterization of $\ku$-manifolds using curvature of their metric symplectizations.
\begin{theorem}\label{thm:currelkum}
    A contact metric manifold $(M,\eta, g,\phi)$ is a non-Sasakian $(\kappa,\mu)$-manifold if and only if there exist $\tilde{\kappa},\tilde{\mu}\in \mb{R}$ such that the metric symplectization $(M\times \mathbb{R}, \w, \bar{g}, J)$ satisfies the curvature relation
    \begin{equation}\label{eq:curclass}
        \mc{V}(R(X,Y)\xi_{t}) = (\tilde{\kappa}I + \tilde{\mu}h)(\eta_{t}(Y)X-\eta(X)Y),
    \end{equation}
    for some $t\in\mb{R}$ and for all $X,Y\in\V{M}$. Further, if Equation \eqref{eq:curclass} is satisfied for some $t\in\mb{R}$, then it is satisfied (for different constants $\tilde{\kappa},\tilde{\mu}$) for every $t\in\mb{R}$.
\end{theorem}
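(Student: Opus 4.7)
The plan is to leverage two structural facts already established in the paper: each slice $(M\times\{t\},\eta_t,g_t,\phi_t)$ is a $\mathcal{D}$-homothetic transformation of $(M,\eta,g,\phi)$, and the $\ku$-nullity condition is invariant (with rescaled constants) under $\mathcal{D}$-homothetic deformations. The operator $\mathcal{V}$ in \eqref{eq:curclass} extracts the component tangent to the slice, so the curvature relation of Theorem \ref{thm:currel} should let me translate ambient curvature identities on $(M\times\mb{R},\w,\bar{g},J)$ into intrinsic curvature identities on the slices, and vice versa.

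For the forward direction, I assume $(M,\eta,g,\phi)$ is a non-Sasakian $\ku$-manifold. Each slice $M\times\{t\}$, being a $\mathcal{D}$-homothetic transform, is then a non-Sasakian $(\kappa_t,\mu_t)$-manifold where $\kappa_t,\mu_t$ are given by the standard transformation laws. Feeding the intrinsic identity $R^{t}(X,Y)\xi_t=(\kappa_t I+\mu_t h_t)(\eta_t(Y)X-\eta_t(X)Y)$ into the formula of Theorem \ref{thm:currel} produces, after projecting by $\mathcal{V}$, an equation of the shape \eqref{eq:curclass}: the Gauss-equation/O'Neill correction terms involving the second fundamental form of the slice contribute only in the directions $X,Y$ (not in any new tensorial direction), and so they can be absorbed into the constants $\tilde{\kappa},\tilde{\mu}$, yielding the desired form with explicit values depending on $t$. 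For the reverse direction, I assume \eqref{eq:curclass} holds for some $t_0\in\mb{R}$ and run the same curvature relation in the opposite direction: extracting the intrinsic curvature $R^{t_0}(X,Y)\xi_{t_0}$ shows that the slice $M\times\{t_0\}$ is a $(\kappa_{t_0},\mu_{t_0})$-manifold; by the $\mathcal{D}$-homothetic invariance of the $\ku$-nullity condition, $(M,\eta,g,\phi)$ itself is a $\ku$-manifold. The non-Sasakian property is preserved because $\mathcal{D}$-homothetic transformations of Sasakian manifolds remain Sasakian, so a Sasakian $M$ would give a Sasakian slice at every $t$, contradicting the form of \eqref{eq:curclass} once one tracks its ensuing $\kappa_t,\mu_t$ through the $\mathcal{D}$-homothetic transformation laws (which force $\mu_t=0$ and $\kappa_t=1$ in the Sasakian case, but the given $\tilde{\kappa},\tilde{\mu}$ need not be consistent with that).

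For the final statement on independence of $t$, I observe that any two slices $M\times\{t_1\}$ and $M\times\{t_2\}$ are related by a $\mathcal{D}$-homothetic transformation (obtained by composition through the base), so the $\ku$-nullity property, being $\mathcal{D}$-homothetically invariant, propagates from one slice to all others, with $(\tilde{\kappa},\tilde{\mu})$ transforming by the known rescaling formulas. Reconverting through Theorem \ref{thm:currel} then yields \eqref{eq:curclass} at every $t\in\mb{R}$.

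The main obstacle I anticipate is the book-keeping in the Gauss/O'Neill-type formula from Theorem \ref{thm:currel}: I must verify that the extrinsic pieces of $\mathcal{V}(R(X,Y)\xi_t)$ — arising from the second fundamental form of the slice in the metric symplectization and the fact that $\xi_t$ differs from $\xi$ by the $\mathcal{D}$-homothetic factor — reorganize cleanly into a linear combination of $X$, $Y$, $hX$, $hY$ with coefficients of the form $\eta_t(Y)$ and $\eta_t(X)$ (equivalently, $\eta(Y)$ and $\eta(X)$, up to the conformal factor relating $\eta$ and $\eta_t$). Once this packaging is confirmed, both directions and the independence of $t$ follow by a direct matching of constants with the $\mathcal{D}$-homothetic transformation formulas for $\kappa$ and $\mu$ recorded earlier in the paper.
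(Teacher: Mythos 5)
Your proposal follows essentially the same route as the paper: substitute $Z=\xi_t$ into the curvature relation of Theorem \ref{thm:currel}, observe that the O'Neill correction terms only shift the constant (from $\kappa_t$ to $\kappa_t-2$) without introducing new tensorial directions, and transfer the nullity condition between slices and back to $M$ via $\mathcal{D}$-homothetic invariance. The one place you go beyond the paper is the attempted argument that the converse forces $M$ to be non-Sasakian; that step does not actually work as written (for Sasakian $M$ one has $h=0$, so \eqref{eq:curclass} holds with $\tilde{\kappa}=-1$ and $\tilde{\mu}$ arbitrary, and no contradiction arises), though the paper's own proof silently omits this point as well.
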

The curvature relations also give a geometric condition on the metric symplectization which forces the contact metric manifold to be $\eta$-Einstein (see Proposition \ref{prop:einstienmetsymp}). In particular, we have the following result which specifies sufficient conditions on the metric symplectization of a $\ku$-manifold which forces it to be rigid, i.e., Sasakian.
\begin{theorem}\label{thm:kumrig}
    If the metric symplectization of a $\ku$-manifold $M$ satisfies the curvature relation $\overline{Ric}=(-2n-4)dt^{2}$, then $M$ is Sasakian.
\end{theorem}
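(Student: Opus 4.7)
The hypothesis $\overline{Ric} = (-2n-4)dt^{2}$ splits into two parts. The $(\partial_{t},\partial_{t})$ component being $-2n-4$ is automatic by Corollary \ref{cor:negriccur}, so the real content is that $\overline{Ric}$ vanishes whenever at least one argument is horizontal, i.e.\ tangent to some slice $M\times \{t_{0}\}$. My plan is to translate this pointwise vanishing into an intrinsic condition on $M$ using the curvature relations of Theorem \ref{thm:currel}, and then to argue that on a $\ku$-manifold this condition forces the tensor $h$ to vanish, which is equivalent to $M$ being Sasakian.

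As a first step I would use Proposition \ref{prop:einstienmetsymp}: the hypothesis $\overline{Ric} = (-2n-4)dt^{2}$ is a strong geometric condition on the metric symplectization, which should satisfy the criterion of that proposition and thereby force $(M,\eta,g,\phi)$ to be $\eta$-Einstein. Combining this with the standard Ricci formula for a non-Sasakian $\ku$-manifold,
\begin{equation*}
\tR(X,Y) = [2(n-1) - n\mu]\, g(X,Y) + [2(n-1) + \mu]\, h(X,Y) + [2(1-n) + n(2\kappa+\mu)]\, \eta(X)\eta(Y),
\end{equation*}
the vanishing of the $h$-coefficient forces $\mu = 2(1-n)$. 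This already eliminates generic $\ku$-manifolds, but to reach Sasakian one must extract additional information from the vanishing of $\overline{Ric}$ on horizontal pairs. Plugging the above Ricci formula, specialised to $\mu = 2(1-n)$, into the curvature relations of Theorem \ref{thm:currel}, and using that the slices are $\mathcal{D}$\nb-homothetic transformations of $(M,\eta,g,\phi)$, produces at each slice a pair of scalar equations in $\kappa$ (from the $g_{t_{0}}$ and $\eta_{t_{0}}\otimes\eta_{t_{0}}$ coefficients respectively). After incorporating the second fundamental form contribution of the slice as a submanifold of the symplectization, the only simultaneous solution is $\kappa = 1$, which by the $\ku$-classification forces $h \equiv 0$, i.e.\ $M$ is Sasakian.

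The main obstacle will be the bookkeeping in the second step: both the $\mathcal{D}$\nb-homothetic transformation formulae and the submersion correction terms mix the $g$ and $\eta\otimes\eta$ components, so considerable care is needed before separating the equation into its independent tensorial pieces. A cleaner route that I expect to succeed is to concentrate on the single scalar equation $\overline{Ric}(\xi_{t_{0}},\xi_{t_{0}}) = 0$, combine it with the identity $\tR(\xi,\xi) = 2n\kappa$ valid on every $\ku$-manifold, and propagate through the $\mathcal{D}$\nb-homothetic scaling; together with the already established condition $\mu = 2(1-n)$, this single scalar relation should pin down $\kappa = 1$ directly, bypassing the explicit separation into $g$ and $\eta\otimes\eta$ pieces.
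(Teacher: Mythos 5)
Your opening step coincides with the paper's: apply Proposition \ref{prop:einstienmetsymp} to get that the slices are $\eta_{t}$-Einstein, and use the fact that a non-Sasakian $\ku$-manifold is $\eta$-Einstein only if $\mu=2-2n$. But you then leave the argument unfinished, and the finish is where the actual content lies. The paper's decisive observation is one you have in hand but do not use: Proposition \ref{prop:einstienmetsymp} forces \emph{every} slice $M\times\{t\}$ to be $\eta_{t}$-Einstein, the slices sweep out the entire $\mc{D}$-homothetic family of $M$ (Remark \ref{rem:metdhomo}), so $\mu_{t}=2-2n$ must hold for all $t$ simultaneously. By the transformation law $\mu'=(\mu+2a-2)/a$ of Equation \eqref{eq:kuDhomo}, $\mu$ is constant along the family only if $\mu=2$, whence $2-2n=2$ and $n=0$, a contradiction. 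No further curvature extraction is needed; your proposed second stage (separating $g_{t}$ and $\eta_{t}\otimes\eta_{t}$ components, tracking second fundamental form corrections) is never carried out and is not the mechanism that closes the proof.

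Your ``cleaner route'' also predicts the wrong outcome. From Corollary \ref{cor:Ricrel}, $\overline{Ric}(\xi_{t},\xi_{t})=Ric^{t}(\xi_{t},\xi_{t})-4n-4$, and on a $\ku$-manifold $\tR(\xi,\xi)=2n\kappa$; so the hypothesis gives $2n\kappa_{t}=4n+4$, i.e.\ $\kappa_{t}=2+\tfrac{2}{n}$, not $\kappa=1$. Since every $\ku$-manifold satisfies $\kappa\leq 1$, this is an outright contradiction (the hypothesis is never realized), which would in fact yield a shorter proof than the paper's --- but only if you recognize it as a contradiction and argue accordingly, rather than expecting the computation to ``pin down $\kappa=1$.'' As written, the proposal stops before either route is validly completed.
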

In the final section, we present an application of the metric symplectization which is motivated by the investigation of  characterization of contact manifolds using symplectomorphic symplectizations. We prove the following characterization for $\ku$-manifolds.
\begin{theorem}\label{thm:charkumsymp}
    Let $(M^{2n+1},\eta,g,\phi)$ be a $\ku$-manifold, and $(M'^{2n+1}, \eta', g',\phi')$ be a $(\kappa',\mu')$-manifold. Then, $M$ and $M'$ are isomorphic up to $\mc{D}$-homothetic transformations if and only if their metric symplectizations are isomorphic to each other.
\end{theorem}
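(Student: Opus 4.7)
My plan is to handle the two directions separately.

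For the \emph{only if} direction, I would use the fact (noted in the introduction) that every slice $M\times\{t_{0}\}$ of the metric symplectization is contact-metric isomorphic to a $\mc{D}$-homothetic transformation of $M$, and conversely that every $\mc{D}$-homothetic transformation of $M$ arises as some such slice. If $M'\cong M_{a}$ for some parameter $a$, then I pick $t_{0}\in\mb{R}$ so that the slice $M\times\{t_{0}\}$ is contact-metric isomorphic to $M_{a}\cong M'$; combining this slice isomorphism with a translation in the $\mb{R}$-factor yields a candidate diffeomorphism $F\colon M'\times\mb{R}\to M\times\mb{R}$. Verifying that $F$ preserves $\w$, $\bar g$ and $J$ is then a direct computation using the explicit formulas for the metric symplectization together with the $\mc{D}$-homothetic invariance of the $\ku$-condition from \cite{KUM}.

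For the \emph{if} direction, let $F\colon(M\times\mb{R},\w,\bar g,J)\to(M'\times\mb{R},\w',\bar g',J')$ be a given isomorphism. The heart of the argument is to show that $F$ maps slices to slices. I would first characterize the Liouville vector field $\partial_{t}$ purely in terms of metric-symplectic invariants: by Corollary \ref{cor:negriccur} the Ricci curvature in the direction $\partial_{t}$ equals $-2n-4$, and inspecting the curvature computation of Theorem \ref{thm:currel} specialized to the $\ku$-setting should show that on a $\ku$-metric symplectization this value is a \emph{simple} eigenvalue of $\overline{\tR}$, with eigenline $\mb{R}\partial_{t}$. Since $F$ is an isometry, $F_{*}\partial_{t}$ is forced into the corresponding eigenline on the target, giving $F_{*}\partial_{t}=\pm\partial_{t'}$; the sign is pinned down by preservation of the Liouville condition $\mathcal{L}_{\partial_{t}}\w=\w$, and it follows that $F$ is a fibrewise contact-metric isomorphism composed with a translation in the $\mb{R}$-factor.

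Once slice preservation is established, the restriction $F|_{M\times\{t_{0}\}}\colon M\times\{t_{0}\}\to M'\times\{t_{0}'\}$ is a contact-metric isomorphism between two contact metric manifolds which are $\mc{D}$-homothetic transformations of $M$ and $M'$ respectively. Composing with the corresponding inverse $\mc{D}$-homothetic transformations then exhibits a contact-metric isomorphism between a $\mc{D}$-homothetic transformation of $M$ and one of $M'$, completing the proof.

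The hard step will be the identification $F_{*}\partial_{t}=\pm\partial_{t'}$: an arbitrary symplectomorphism of $(M\times\mb{R},\w)$ does not preserve the Liouville structure, so the argument must use the metric and the almost complex structure in an essential way. The main technical subtlety I anticipate is to verify that the $-2n-4$ eigenspace of $\overline{\tR}$ in a $\ku$-metric symplectization is genuinely one-dimensional rather than inflated by coincidental eigenvalues coming from the contact distribution; this would be handled by using Theorem \ref{thm:currel} to compute the remaining Ricci eigenvalues explicitly in terms of $\kappa$, $\mu$ and $t$, and checking that they never collide with $-2n-4$ for a non-Sasakian $\ku$-manifold (with the Sasakian case handled separately, since there Corollary \ref{cor:negriccur} and the rigidity result Theorem \ref{thm:kumrig} already give the desired characterization).
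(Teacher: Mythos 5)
Your overall strategy coincides with the paper's on both directions: the forward implication is proved by realizing the $\mc{D}$-homothetic copy of $M$ as a slice of the metric symplectization and composing with a translation in the $\mb{R}$-factor, and the converse is proved by using the Ricci value $-2n-4$ to force the image of the Liouville vector field back onto the Liouville direction, whence the isomorphism carries some slice to a slice and the conclusion follows from Remark \ref{rem:metdhomo}. The one step of your plan that would fail as written is the verification that the remaining Ricci eigenvalues ``never collide with $-2n-4$'' on a non-Sasakian $\ku$-metric symplectization. They can collide at particular slices: writing the unit vector $Y=df(\partial_{t'})$ as $Y_{\lambda_{t}}+Y_{-\lambda_{t}}+\alpha\xi_{t}+\beta\partial_{t}$, the condition $\overline{Ric}(Y,Y)=-2n-4$ reduces to
\begin{equation*}
\Vert Y_{\lambda_{t}}\Vert^{2}\bigl(\lambda_{t}^{2}I_{M}-\lambda_{t}n(1+I_{M})+1\bigr)+\Vert Y_{-\lambda_{t}}\Vert^{2}\bigl(-\lambda_{t}^{2}+\lambda_{t}n(1-I_{M})+1\bigr)+\alpha^{2}n\lambda_{t}^{2}=0,
\end{equation*}
and the two quadratic coefficients in $\lambda_{t}$ can vanish or become negative for suitable values of $\lambda_{t}$, so simplicity of the eigenvalue $-2n-4$ is not available uniformly in $t$, and a slice-by-slice eigenspace argument cannot be run everywhere.

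The resolution used in the paper, which you should adopt, is to exploit the $\mb{R}$-direction rather than a fixed slice: since $\lambda_{t}=\lambda e^{-2t}\to 0$ as $t\to\infty$, both quadratic coefficients tend to $1$ and are strictly positive for all $t$ greater than some $t_{0}$. On that region the displayed identity forces $\Vert Y_{\lambda_{t}}\Vert=\Vert Y_{-\lambda_{t}}\Vert=\alpha=0$, hence $df(\partial_{t'})=\pm\partial_{t}$ there, and a single slice-to-slice isomorphism (which is all the argument needs) follows. Your remaining steps --- pinning down the sign via the Liouville condition, restricting $f$ to a slice to get a contact metric isomorphism, and undoing the $\mc{D}$-homothetic transformations --- match the paper; the paper in fact leaves the sign of $\beta$ implicit, so your remark on that point is a small improvement.
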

\begin{remark}
    Theorem \ref{thm:charkumsymp} is interesting because it is in contrast with the result in the contact category where it is known (see \cite{SC}) that there exist contact manifolds which are not contactomorphic, whose symplectizations are symplectomorphic.
\end{remark}

The paper is organized as follows. In Section \ref{sec:prel}, fundamental notions in Riemannian contact and symplectic geometry relevant to the results in the article will be recalled. In Section \ref{sec:TMS}, we study contact metric hypersurfaces in certain symplectic metric manifolds and define the metric symplectization. In Section \ref{sec:CP}, we study the relations between the curvature of a contact metric manifold and its metric symplectization. We establish a characterizing condition for $\ku$-manifolds using the metric symplectization. We also prove a rigidity result for $\ku$-manifolds. In Section \ref{sec:app}, we give an application of the metric symplectization which relates isomorphisms of the symplectization to isomorphisms of the contact metric manifold up to $\mc{D}$-homothetic transformations.

\noindent {\bf Acknowledgements:} I am grateful to Atreyee Bhattacharya and Dheeraj Kulkarni for being available for discussions and providing useful comments and feedback. I also thank them for their detailed suggestions on the preliminary draft of the manuscript which has significantly improved the article.

The author was supported by the CSIR grant 09/1020(0143)/2019-
EMR-I, DST, Government of India.
\section{Preliminaries}\label{sec:prel}
\subsection{Contact and symplectic manifolds}\label{subsec:consym}
\begin{definition}[Contact manifolds]\label{def:con}
    A differential $1$-form $\eta$ on a manifold $M$ of dimension $(2n+1)$ is said to be a contact form if
    \begin{equation}\label{eq:con}
        \eta_{p} \wedge (d\eta_{p})^{n}\neq 0 \text{ for all }p\in M.
    \end{equation}
    In this case, the distribution $Ker(\eta)$ is called a contact distribution on the manifold $M$. A contact manifold is a pair $(M,\mc{D}=Ker(\eta))$ comprising an odd-dimensional manifold $M$ and a contact distribution $\mc{D}$.
\end{definition}
For a detailed treatment of contact manifolds, we refer the reader to \cite{GCT}.
\begin{remark}\label{rem:definingforms}
    If $\mc{D}=Ker(\eta)$ for some contact form $\eta$, then $\mc{D}=Ker(f\eta)$ for any non-vanishing function $f\in C^{\infty}(M)$. Thus, a contact distribution can be given by various contact forms. We will use the notation $(M,\eta)$ for a contact manifold, where $\eta$ is a contact form on $M$.
\end{remark}
\begin{definition}[Reeb vector field]\label{def:RVF}
    Given any contact manifold $(M,\eta)$, the following conditions determine a unique non-vanishing vector field $\xi$.
    \begin{itemize}
        \item $\eta(\xi) = 1$, and
        \item $d\eta(\xi,X) = 0$, for any $X\in\V{M}$.
    \end{itemize}
    This vector field is called the Reeb vector field associated to the contact form $\eta$. 
\end{definition}
Throught the article, $\xi$ will be used to denote the Reeb vector field associated to the contact manifold in consideration.

The even-dimensional counterparts of contact manifolds are symplectic manifolds, which we now define.
\begin{definition}\label{def:symp}
    A differential $2$-form $\w$ on a manifold $B$ of dimension is called a symplectic form if it is closed ($d\w = 0$) and
    \begin{equation}\label{eq:sympform}
        \w_{b}^{n}\neq 0 \text{ for all }b\in B.
    \end{equation}
    A symplectic manifold is a pair $(B,\w)$ comprising an even-dimensional manifold $B$ and a symplectic form $\w$ on $B$.
\end{definition}
Equivalence of contact (resp. symplectic) manifolds are given in terms of the contact (resp. symplectic) forms as follows.
\begin{definition}[Equivalence of Contact and Symplectic manifolds]\noindent\label{def:consympiso}
    \begin{itemize}
        \item Contact manifolds $(M_{1},\eta_{1})$ and $(M_{2},\eta_{2})$ are said to be contactomorphic to each other if there is a diffeomorphism $f:M_{1}\rightarrow M_{2}$ such that $f^{*}\eta_{2}=\eta_{1}$.
        \item Symplectic manifolds $(B_{1},\w_{1})$ and $(B_{2},\w_{2})$ are said to be symplectomorphic to each other if there is a diffeomorphism $f:B_{1}\rightarrow B_{2}$ such that $f^{*}\w_{2}=\w_{1}$.
    \end{itemize}
\end{definition}
The presence of certain types of vector fields on a symplectic manifold can be used to detect the existence of hypersurfaces (embedded submanifolds of codimension 1) which support an induced contact structure.
\begin{definition}\label{def:LVF}
    Let $(B,\w)$ be a symplectic manifold. A vector field $Y$ on $B$ is said to be a Liouville vector field if $\mc{L}_{Y}\w = \w$.
\end{definition}
\begin{Lemma}\label{lem:hypLVF}
    Let $(B,\w)$ be a symplectic manifold with a Liouville vector field $Y$. Then, any hypersurface which is transverse to $Y$ is a contact manifold with the induced contact form given by $\eta := i_{Y}\w = \w(Y,-)$.
\end{Lemma}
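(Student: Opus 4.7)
The plan is to show that $\eta := i_Y\omega$, once pulled back to a hypersurface $\Sigma \subset B$ transverse to $Y$, satisfies the contact condition $\eta \wedge (d\eta)^n \neq 0$ on $\Sigma$; here $\dim B = 2n+2$ so $\dim \Sigma = 2n+1$. The strategy has three short steps: compute $d\eta$ globally on $B$, relate $\eta \wedge (d\eta)^n$ to the symplectic volume form via interior multiplication, and invoke transversality at the end.

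The first step uses Cartan's magic formula together with the closedness of $\omega$:
\[
\mathcal{L}_Y \omega \;=\; d\, i_Y \omega + i_Y\, d\omega \;=\; d\eta.
\]
Combined with the Liouville condition $\mathcal{L}_Y \omega = \omega$, this yields the identity $d\eta = \omega$ on all of $B$.

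The second step exploits the antiderivation property of the interior product on the exterior algebra. Since $\omega$ has even degree, applying $i_Y$ inductively to $\omega^{n+1}$ gives
\[
i_Y \omega^{n+1} \;=\; (n+1)\,(i_Y \omega) \wedge \omega^n \;=\; (n+1)\,\eta \wedge (d\eta)^n,
\]
using $d\eta = \omega$ in the second equality. Since $\omega$ is symplectic, $\omega^{n+1}$ is a volume form on $B$.

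For the final step, let $j \colon \Sigma \hookrightarrow B$ be the inclusion and take any basis $\{v_1, \ldots, v_{2n+1}\}$ of $T_p\Sigma$. Transversality of $Y$ to $\Sigma$ ensures that $\{Y_p, v_1, \ldots, v_{2n+1}\}$ is a basis of $T_p B$, so
\[
\bigl(j^*(\eta \wedge (d\eta)^n)\bigr)(v_1,\ldots,v_{2n+1}) \;=\; \tfrac{1}{n+1}\,\omega^{n+1}(Y_p, v_1, \ldots, v_{2n+1}) \;\neq\; 0.
\]
Hence $j^*\eta$ is a contact form on $\Sigma$. No real obstacle is anticipated: the argument is entirely formal once one recognizes the two identities $d\eta = \omega$ and $i_Y \omega^{n+1} = (n+1)\,\eta \wedge \omega^n$, both immediate consequences of Cartan's formula, the closedness of $\omega$, and the derivation property of $i_Y$.
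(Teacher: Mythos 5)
Your proof is correct and complete: the identities $d\eta=\omega$ (from Cartan's formula and $d\omega=0$) and $i_Y\omega^{n+1}=(n+1)\,\eta\wedge(d\eta)^n$, combined with transversality, are exactly the standard argument for this classical fact. The paper itself states Lemma \ref{lem:hypLVF} without proof, citing it as well known, so there is nothing to compare against; your write-up supplies precisely the expected justification.
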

The interplay between symplectic manifolds and contact manifolds is a very prominent theme in the field and the above result is one instance of the same. The following is a way to associate a symplectic manifold to any contact manifold.
\begin{definition}\label{def:symplectization}
    Let $(M,\eta)$ be a contact manifold. On the product manifold $(M\times \mb{R})$, the $2$-form $\w := d(e^{2t}\eta)$ is a symplectic form, where $t$ denotes the coordinate on $\mb{R}$. The symplectic manifold $(M\times \mb{R}, \w)$ thus obtained is called the symplectization of the contact manifold $(M,\eta)$.
\end{definition}
It can be verified that the coordinate vector field $\partial_{t}$ is a Liouville vector field in the symplectization. For every  $t_{0}\in \mb{R}$, the slice $(M\times \{t_{0}\})$ is a hypersurface of $(M\times \mb{R})$ which is transverse to the Liouville vector field $\partial_{t}$. In view of Lemma \ref{lem:hypLVF}, the slice at $t_{0}$ is a contact manifold with the contact form $e^{2t}\eta$. In this article, we establish the analogue of Lemma \ref{lem:hypLVF} in the contact metric category.
\subsection{Contact metric manifolds}\hfill\label{subsec:cmm}

Riemannian geometry of contact (resp. symplectic manifolds) lies in the intersection of Riemannian geometry and contact (resp. symplectic geometry). In this subsection, we recall the notions from contact and symplectic metric geometry which will be used in the later sections. For a comprehensive treatment of Riemannian Geometry of Contact and Symplectic manifolds, see \cite{BRCS}.
\begin{definition}[Symplectic metric manifold]\label{def;sympmet}
    Let $(B,\w)$ be a symplectic manifold. A Riemannian metric $g$ is said to be compatible with $\w$ if there exists an almost complex structure $J$ (a $(1,1)$-tensor such that $J^{2} = -I$) which satisfies
    \begin{equation}\label{eq:sympcomp}
        g(X,JY) = \w(X,Y),
    \end{equation}
    for all $X,Y \in \V{B}$. In this case, the tuple $(B,\w, g,J)$ will be called a \textit{symplectic metric manifold}.
\end{definition}
\begin{remark}\label{rem:sympmetdef}
    For a symplectic metric manifold $(B,\w, g, J)$, any two of the structure tensors $\w, g$ and $J$ determine the other structure tensor. Equivalent definitions for symplectic metric manifolds can be given in terms of compatibility between any two of the structure tensors.
\end{remark}
It is known that a compatible Riemannian metric always exists for any symplectic manifold $(B,\w)$. Next, we recall the notion of compatibility between a contact form and a Riemannian manifold.
\begin{definition}[Contact metric manifold]\label{def:cmm}
    Let $(M,\eta)$ be a contact manifold with Reeb vector field $\xi$. A Riemannian metric $g$ is said to be compatible with $\eta$ if
    \begin{itemize}
        \item $g(X,\xi) = \eta (X)$, for all $X\in \V{M}$,
        \item There exists a $(1,1)$-tensor field $\phi$ on $M$ which satisfies $\phi^{2}=-I+\eta \otimes \xi$ and $d\eta(X,Y) = g(X,\phi Y)$, for all $X,Y\in \V{M}$.
    \end{itemize}
    In this case, the tuple $(M,\eta , g ,\phi )$ will be called a contact metric manifold.
\end{definition}
\begin{remark}\label{rem:consympmetiso}
    The notion of equivalence for the metric versions of contact (resp. symplectic) manifolds is given by contactomorphisms (resp. symplectomorphisms) which are isometries of the underlying Riemannian manifold. Equivalent contact or symplectic metric manifolds will be called \textit{isomorphic} to each other.
\end{remark}
On the symplectization of a contact metric manifold $(M,\eta, g,\phi)$, there is a natural almost complex structure $J$ defined by
\begin{equation}\label{eq:natalmcom}
    J(X,f\partial_{t})=(\phi X -f\xi, \eta(X)\partial_{t}),
\end{equation}
for all $X\in \V{M}$ and $f\in C^{\infty}(M\times \mb{R})$. It can be checked that $\bar{g}(\bar{X},\bar{Y}) = \w (J\bar{X},\bar{Y})$, for all $\bar{X},\bar{Y}\in \V{M\times \mb{R}}$ defines a Riemannian metric on the symplectization which is compatible with the symplectic form. This symplectic metric structure, which will be referred to as the \textit{natural symplectic metric structure} on the symplectization, has been used to define a particular class of contact metric manifolds by Shigeo Sasaki (see \cite{SSM}).
\begin{definition}[Sasakian manifold]\label{def:sm}
    A contact metric manifold is said to be Sasakian if the natural almost complex structure on its symplectization is integrable.
\end{definition}
As is evident from the definition, Sasakian manifolds are closely related to symplectic metric manifolds with integrable almost complex structures, which are more popularly known as Kähler manifolds. The relationship between Sasakian and Kähler manifolds has been studied and it is known that a Sasakian manifold is naturally sandwiched between two Kähler manifolds. We refer the reader to \cite{SG} for a detailed treatment of the same.

We now recall the definition of another class of contact metric manifolds, which generalize Sasakian manifolds.
\begin{definition}[$K$-contact manifold]\label{def:kcm}
    A contact metric manifold for which the Reeb vector field is a Killing vector field (the Reeb flow is by isometries) is called a $K$-contact manifold. 
\end{definition}
\begin{remark}\label{rem:tensorh}
    Given a contact metric manifold $(M,\eta,g,\phi)$, consider the tensor field $h:=\frac{1}{2}\mc{L}_{\xi}\phi$. It can be checked that the contact metric manifold is $K$-contact if and only if $h\equiv 0$. The tensor $h$ plays a very important role in the study of contact metric manifolds which are not $K$-contact.
\end{remark}
In \cite{EMCG}, Boyer and Galicki give a formulation of the $K$-contact condition using the natural symplectic structure on the symplectization $(M\times \mb{R}^{+},d(t\eta))$. 
\begin{theorem}[\cite{EMCG}]
    A compact contact metric manifold $(M,\eta, g, \phi)$ is a $K$-contact manifold if and only if its natural symplectization $(\bar{M},\w,\bar{g},J)$ is almost Kähler with $t\partial_{t} - i\xi$ pseudo-holomorphic.
\end{theorem}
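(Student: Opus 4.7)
The plan is to decompose the biconditional into infinitesimal conditions on the natural symplectization $(\bar{M},\omega,\bar{g},J)$ with $\omega=d(t\eta)$. For any contact metric structure, $\omega$ is exact and $\bar{g}$ is defined so as to be compatible with $J$ via $\omega(\cdot,J\cdot)=\bar{g}(\cdot,\cdot)$, so the ``almost Kähler'' portion of the conclusion holds automatically. The content therefore lies in the pseudo-holomorphicity of $V = t\partial_{t}-i\xi$, which I would interpret, following the standard convention for complex vector fields on almost complex manifolds, as the condition $\mathcal{L}_{V}J=0$ extended complex-linearly. This decouples into the pair of real conditions $\mathcal{L}_{t\partial_{t}}J=0$ and $\mathcal{L}_{\xi}J=0$.

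I would first dispose of $\mathcal{L}_{t\partial_{t}}J=0$. The flow of $t\partial_{t}$ is the Liouville scaling $t\mapsto e^{s}t$, and in an adapted frame $\{\partial_{t},\xi,e_{1},\ldots,e_{2n}\}$ with $\{e_{i}\}$ a local orthonormal frame of $\mathcal{D}$, the explicit formula for $J$, which restricts to $\phi$ on $\mathcal{D}$ and exchanges $\xi$ with $\partial_{t}$ up to $t$-dependent coefficients, has its $t$-weights precisely calibrated to cancel the scaling pushforward. This is a bookkeeping check that holds for every contact metric manifold, so the entire weight of the theorem rests on the single condition $\mathcal{L}_{\xi}J=0$.

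The main step is the equivalence $\mathcal{L}_{\xi}J=0 \Leftrightarrow h\equiv 0$, where $h=\tfrac{1}{2}\mathcal{L}_{\xi}\phi$; note that $h=0$ is exactly the $K$-contact condition by Remark 2.13. For this I would use the decomposition $T\bar{M}=\mathcal{D}\oplus\langle\xi,\partial_{t}\rangle$. On $\mathcal{D}$, $J$ restricts to $\phi$, so $(\mathcal{L}_{\xi}J)|_{\mathcal{D}}=\mathcal{L}_{\xi}\phi = 2h$. On the rank-two summand, $[\xi,\partial_{t}]=0$ and the block of $J$ there depends only on $t$ and not on points of $M$, so $\mathcal{L}_{\xi}J$ vanishes. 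Finally, $\mathcal{L}_{\xi}\eta = 0$ implies that the Reeb flow preserves $\mathcal{D}$, which prevents any off-diagonal contribution in the $\mathcal{D}$ versus $\langle\xi,\partial_{t}\rangle$ decomposition. Combining these, $\mathcal{L}_{\xi}J=0 \Leftrightarrow h\equiv 0$, as required.

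The most delicate point is the compactness hypothesis, which does not enter any step of the infinitesimal analysis above; I would expect it to be used in Boyer and Galicki's original argument either to guarantee completeness of the Reeb flow (so that the global phrasing ``Reeb flow by isometries'' is meaningful) or for a global Hodge- or moment-map-theoretic identification of almost Kähler data in a way compatible with their framework. Reconciling the coordinate conventions between the $\omega=d(e^{2t}\eta)$ used in this paper and the $\omega=d(t\eta)$ appearing in the Boyer-Galicki statement, together with carefully managing the $\mathcal{D}$-to-$\langle\xi,\partial_{t}\rangle$ off-diagonal components of $\mathcal{L}_{\xi}J$ when $M$ is not $K$-contact, are the two places where the computation is likely to demand the most care.
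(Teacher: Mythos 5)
This statement is an attributed result of Boyer and Galicki, quoted in the preliminaries; the paper itself gives no proof of it, so there is no internal argument to compare your proposal against. Judged on its own, your reduction is the standard one and is essentially sound: the ``almost K\"ahler'' clause is automatic because $\bar{g}$ is by construction a compatible metric for the exact symplectic form, and the real content is that pseudo-holomorphicity of $t\partial_{t}-i\xi$ reduces to $\mathcal{L}_{\xi}J=0$, which on the contact distribution computes to $\mathcal{L}_{\xi}\phi=2h$, and $h\equiv 0$ is the $K$-contact condition (Remark \ref{rem:tensorh}). Three points need tightening. First, you should pin down the definition of ``pseudo-holomorphic'' actually used in the source before decoupling it into $\mathcal{L}_{t\partial_{t}}J=0$ and $\mathcal{L}_{\xi}J=0$; for a non-integrable $J$ these two real conditions are genuinely independent, so the interpretation matters. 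Second, your appeals to ``$t$-weights precisely calibrated'' and to the $\langle\xi,\partial_{t}\rangle$-block ``depending only on $t$'' should be replaced by the short explicit bracket computations: with $J(t\partial_{t})=\xi$ one checks directly that $(\mathcal{L}_{t\partial_{t}}J)(Y)=t(\mathcal{L}_{\partial_{t}}J)(Y)-(JY\,t)\partial_{t}+(Y\,t)J\partial_{t}$ vanishes on each of $\mathcal{D}$, $\xi$, $\partial_{t}$, and that $(\mathcal{L}_{\xi}J)(\xi)=[\xi,J\xi]=0$ and $(\mathcal{L}_{\xi}J)(\partial_{t})=[\xi,J\partial_{t}]=0$; the absence of off-diagonal terms on $\mathcal{D}$ follows from $\eta([\xi,X])=\xi(\eta(X))=0$ for $X\in\mathcal{D}$. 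Third, compactness plays no role in this infinitesimal equivalence; it is a standing hypothesis in the cited source rather than an ingredient you need to locate, so your hedging there can simply be dropped.
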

It is clear, from the above illustrations, that contact metric structures are closely related to the symplectic metric structures on the symplectization. In this paper, we define an alternative symplectic metric structure on the symplectization in view of Theorem \ref{thm:lvfcmm} and study the influence of geometric properties of the metric symplectization on the contact metric manifold and vice versa.
\subsection{\texorpdfstring{Contact metric $\boldsymbol{(\kappa,\mu)}$-manifolds}{}}\hfill\label{subsec:kum}

In \cite{KUM}, Blair, Koufogiorgos and Papantoniou define and study a class of manifolds for which the Reeb vector field satisfies the nullity condition
\begin{equation}
    R(X,Y)\xi = (\kappa I +\mu h) ( \eta(Y) X - \eta (X) Y)
\end{equation}
This class of manifolds contains the class of manifolds satisfying the condition $R(X,Y)\xi = 0$, which has been an important class of contact metric manifolds, since it contains tangent sphere bundles of flat Riemannian manifolds as examples. However, the manifolds satisfying the condition $R(X,Y)\xi = 0$ are not stable under $\mc{D}$-homothetic transformations. By a $\mc{D}_{a}$-homothetic transformation of a contact metric manifold $(M,\eta, g,\phi)$, for a real number $a>0$, we mean a transformation of the structure tensors given by
\begin{equation}\label{eq:Dhomo}
    \eta' = a\eta, \qquad g'=ag+a(a-1)\eta \otimes \eta ,\qquad \phi'=\phi.
\end{equation}
It can be checked that the tuple $(M,\eta', g', \phi')$ is also a contact metric manifold with
\begin{equation}\label{eq:Dhomo2}
    \xi' = \frac{\xi}{a},\qquad h'=\frac{h}{a}.
\end{equation}
It can also be checked that the $\mc{D}_{a}$-homothetic transformation of a $\ku$-manifold is a $(\kappa ', \mu ')$-manifold, where
\begin{equation}\label{eq:kuDhomo}
    \kappa ' = \frac{\kappa+a^{2}-1}{a^2},\qquad \mu' = \frac{\mu+2a-2}{a}.
\end{equation}
Many interesting properties of $\ku$-manifolds have been given in \cite{KUM}, some of which will be recalled here.
\begin{theorem}[\cite{KUM}]
    Let $(M,\eta , g, \phi)$ be a $\ku$-manifold. Then,
    \begin{equation*}
        h^{2}=-(1-\kappa)\phi^{2}.
    \end{equation*}
    Thus, $\kappa \leq 1$. Moreover, $\kappa = 1$ if and only if $M$ is Sasakian. If $M$ is non-Sasakian, then $h$ has three eigenvalues 0, $\lambda$ and $-\lambda$, where $\lambda = \sqrt{1-\kappa}$. The tangent bundle of $M$ admits a decomposition into three mutually orthogonal and integrable distributions $\mc{D}_{\lambda}, \mc{D}_{-\lambda}$ and $\mc{D}_{0}$ given by the eigenspaces of $h$ corresponding to the eigenvalues $\lambda,\, -\lambda$ and 0, respectively. 
\end{theorem}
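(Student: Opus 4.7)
The plan is to exploit the interplay between the Jacobi operator $\ell X := R(X,\xi)\xi$ and the tensor $h$ via a standard Bochner-type identity in contact metric geometry, and then unpack the pointwise algebra of $h$. First, specializing the $\ku$-condition to $Y=\xi$ and using $h\xi = 0$ collapses it to $\ell = -\kappa\phi^{2}+\mu h$. Next, I would invoke the classical identity
\begin{equation*}
\phi\ell\phi - \ell = 2\bigl(h^{2}+\phi^{2}\bigr),
\end{equation*}
which holds on any contact metric manifold and follows from $\N{X}\xi = -\phi X - \phi h X$ together with the second Bianchi identity. Using $\phi h = -h\phi$, $\phi^{4} = -\phi^{2}$, and the fact that $h(I+\phi^{2}) = 0$ (since $h\xi = 0$), the left side simplifies to $2\kappa\phi^{2}$, and equating yields $h^{2} = (\kappa-1)\phi^{2} = -(1-\kappa)\phi^{2}$, as claimed.

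Symmetry of $h$ (a standard property of the $\mc{L}_{\xi}\phi$-tensor) implies $h^{2}$ is positive semi-definite. Since $-\phi^{2}$ acts as the identity on the contact distribution $\mc{D}$, the relation $h^{2}|_{\mc{D}} = (1-\kappa)\,I|_{\mc{D}}$ forces $\kappa \leq 1$. For the Sasakian characterization, $\kappa = 1$ gives $h^{2} = 0$, hence $h = 0$ by self-adjointness, and substituting $\kappa = 1,\, h = 0$ back into the $\ku$-condition recovers $R(X,Y)\xi = \eta(Y)X - \eta(X)Y$, the defining curvature identity of Sasakian manifolds; the converse is immediate since every Sasakian manifold satisfies the $\ku$-condition with $(\kappa,\mu) = (1,0)$.

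If $M$ is non-Sasakian, then $\lambda := \sqrt{1-\kappa} > 0$, and the pointwise relation $h^{2}|_{\mc{D}} = \lambda^{2} I|_{\mc{D}}$ together with $h\xi = 0$ shows that the symmetric operator $h$ has eigenvalues exactly $0, \lambda, -\lambda$ on $TM$. Mutual orthogonality of the eigenspaces $\mc{D}_{0}, \mc{D}_{\lambda}, \mc{D}_{-\lambda}$ follows from self-adjointness of $h$, and the anti-commutation $\phi h = -h\phi$ ensures that $\phi$ interchanges $\mc{D}_{\lambda}$ and $\mc{D}_{-\lambda}$ while preserving $\mc{D}_{0}$, which contains $\xi$.

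The step I expect to be the main obstacle is establishing integrability of the three eigendistributions. My approach would be to first derive a closed-form expression for $\N{X}h$: differentiating $\ell = -\kappa\phi^{2}+\mu h$ covariantly and combining with $(\N{X}R)(Y,Z)\xi$ via the second Bianchi identity, and using $\N{X}\xi = -\phi X - \phi h X$, produces $(\N{X}h)Y$ as an explicit tensorial polynomial in $\phi, h, \eta$ with coefficients depending on $\kappa,\mu$. To check integrability of $\mc{D}_{\lambda}$, take $X,Y \in \mc{D}_{\lambda}$ and apply the Leibniz rule $h\N{X}Y = \lambda\N{X}Y - (\N{X}h)Y$ to obtain
\begin{equation*}
h[X,Y] = \lambda[X,Y] - (\N{X}h)Y + (\N{Y}h)X.
\end{equation*}
Integrability of $\mc{D}_{\lambda}$ thus reduces to the pointwise symmetry $(\N{X}h)Y = (\N{Y}h)X$ on $\mc{D}_{\lambda}$, which one verifies directly from the explicit $\N{}h$ formula. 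The analogous argument handles $\mc{D}_{-\lambda}$, and integrability of $\mc{D}_{0}$ follows along the same lines together with the observation that $\xi \in \mc{D}_{0}$; Frobenius's theorem then yields the asserted decomposition.
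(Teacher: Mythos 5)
The paper does not prove this statement: it is quoted from \cite{KUM} as background in the preliminaries, so there is no internal proof to compare against. Your argument is a correct reconstruction of the standard one from that reference. Specializing the nullity condition to $Y=\xi$ gives $\ell=-\kappa\phi^{2}+\mu h$; feeding this into the classical identity $\phi\ell\phi-\ell=2(h^{2}+\phi^{2})$ and using $\phi h=-h\phi$, $\phi^{4}=-\phi^{2}$, $h\phi^{2}=-h$ indeed yields $h^{2}=(\kappa-1)\phi^{2}$, and the consequences ($\kappa\le 1$ from self-adjointness of $h$, the Sasakian case via the characterization $R(X,Y)\xi=\eta(Y)X-\eta(X)Y$, and the spectrum $\{0,\pm\lambda\}$ with $\phi$ interchanging $\mc{D}_{\lambda}$ and $\mc{D}_{-\lambda}$) all follow as you say. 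Two caveats. First, the identity $\phi\ell\phi-\ell=2(h^{2}+\phi^{2})$ is obtained by computing $R(X,\xi)\xi$ directly from $\N{X}\xi=-\phi X-\phi hX$, not via the second Bianchi identity (which enters the $\ku$ theory elsewhere, e.g.\ in showing $\kappa,\mu$ must be constant); since you invoke it as a known fact this does not affect correctness. Second, the integrability step is only an outline: you still need the explicit formula for $(\N{X}h)Y$ (Lemma 3.3 of \cite{KUM}), from which, for $X,Y\in\mc{D}_{\lambda}$, one gets $(\N{X}h)Y=(\lambda^{2}-\lambda)g(X,\phi Y)\xi$, and this vanishes precisely because $\phi Y\in\mc{D}_{-\lambda}$ is orthogonal to $X\in\mc{D}_{\lambda}$; your reduction of integrability to the symmetry of $(\N{X}h)Y$ in $X,Y$ then closes the argument, and $\mc{D}_{0}=\mathbb{R}\xi$ is a line field, hence trivially integrable. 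So the plan is sound and would go through once that formula is derived.
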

The following result establishes the fact that the curvature of a non-Sasakian $\ku$-manifold is completely determined by the constants $\kappa$ and $\mu$.
\begin{theorem}[\cite{KUM}]\label{thm:curkum}
    Let $(M^{2n+1}, \eta, g, \phi)$ be a non-Sasakian $\ku$-manifold. For $X_{\lambda}, Y_{\lambda}, Z_{\lambda} \in \mc{D}_{\lambda}$ and $X_{-\lambda}, Y_{-\lambda}, Z_{-\lambda} \in \mc{D}_{-\lambda}$, we have
    \begin{flalign}\label{eq:curkum}
        R(X_{\lambda}, Y_{\lambda})Z_{-\lambda} =& (\kappa - \mu)\left[ g(\phi Y_{\lambda}, Z_{-\lambda})\phi X_{\lambda} -g(\phi X_{\lambda}, Z_{-\lambda})\phi Y_{\lambda} \right]\notag \\
        R(X_{-\lambda}, Y_{-\lambda})Z_{\lambda} =& (\kappa - \mu)\left[ g(\phi Y_{-\lambda}, Z_{\lambda})\phi X_{-\lambda} -g(\phi X_{-\lambda}, Z_{\lambda})\phi Y_{-\lambda} \right]\notag \\
        R(X_{\lambda}, Y_{-\lambda})Z_{-\lambda} =& \kappa g(\phi X_{\lambda},Z_{-\lambda})\phi Y_{-\lambda} + \mu g(\phi X_{\lambda}, Y_{-\lambda})\phi Z_{-\lambda} \notag \\
        R(X_{\lambda}, Y_{-\lambda})Z_{\lambda} =& -\kappa g(\phi Y_{-\lambda},Z_{\lambda})\phi X_{\lambda} - \mu g(\phi Y_{-\lambda}, X_{\lambda})\phi Z_{\lambda} \notag \\
        R(X_{\lambda}, Y_{\lambda})Z_{\lambda} =& \left[2(1+\lambda)-\mu\right]\left[g(Y_{\lambda}, Z_{\lambda})X_{\lambda} - g(X_{\lambda}, Z_{\lambda})Y_{\lambda}\right] \notag \\
        R(X_{-\lambda}, Y_{-\lambda})Z_{-\lambda} =& \left[2(1-\lambda)-\mu\right]\left[g(Y_{-\lambda}, Z_{-\lambda})X_{-\lambda} - g(X_{-\lambda}, Z_{-\lambda})Y_{-\lambda}\right] 
    \end{flalign}
\end{theorem}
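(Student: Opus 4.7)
The plan is to follow the original approach of \cite{KUM}. The key structural ingredients are already on the table: the $\ku$-nullity condition itself, the eigenstructure of $h$ described in the theorem immediately preceding this one, the anticommutation $h\phi + \phi h = 0$, and the standard contact metric identity $\nabla_X \xi = -\phi X - \phi h X$ (so $\nabla_{X_{\pm\lambda}}\xi = -(1\pm\lambda)\phi X_{\pm\lambda}$). From these one extracts in sequence the covariant derivatives $\nabla h$, $\nabla \phi$, and finally $R(X,Y)Z$.

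\textbf{Step 1: the formula for $\nabla h$.} Because $R(X,Y)\xi$ is given tensorially in terms of $\kappa, \mu, \eta, h$, I would compute $(\nabla_Z R)(X,Y)\xi$ in two ways. On the one hand, differentiate the right-hand side of \eqref{eq:kum} using the Leibniz rule, substituting $\nabla_Z \xi = -\phi Z - \phi h Z$ and using $h\xi = 0$, $h\phi + \phi h = 0$, $h^2 = (\kappa-1)\phi^2$. On the other hand, invoke the second Bianchi identity $\sum_{\text{cyc}(X,Y,Z)}(\nabla_Z R)(X,Y)\xi = 0$. Equating the two and contracting against suitable test vectors yields an explicit expression for $(\nabla_X h)Y$ in terms of $\eta, \phi, h, \kappa, \mu$.

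\textbf{Step 2: $\nabla \phi$ and the final curvature computation.} The general contact metric identity expressing $(\nabla_X \phi)Y$ in terms of $\eta$, $\phi$ and $h$ (see \cite{BRCS}), combined with the Step 1 formula for $\nabla h$, produces an explicit formula for $\nabla \phi$ on any $\ku$-manifold. Having both $\nabla \phi$ and $\nabla h$, one computes
\[
R(X,Y)Z = \nabla_X \nabla_Y Z - \nabla_Y \nabla_X Z - \nabla_{[X,Y]} Z
\]
for vectors taken from the eigen-distributions $\mc{D}_{\pm\lambda}$. The crucial simplifications are $\phi \mc{D}_{\pm\lambda} = \mc{D}_{\mp\lambda}$, the scalar action $\nabla_{X_{\pm\lambda}} \xi = -(1\pm\lambda)\phi X_{\pm\lambda}$, and the integrability of $\mc{D}_{\pm\lambda}$ (itself a consequence of the formula for $\nabla h$), which controls the bracket term $\nabla_{[X,Y]}Z$ when $X$ and $Y$ sit in the same eigenspace. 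Running the computation through each of the six eigenspace assignments appearing in the theorem produces the six identities in \eqref{eq:curkum}.

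The main obstacle will be bookkeeping. Once $\nabla h$ and $\nabla \phi$ are in hand, each of the six identities is a short computation in isolation, but the case analysis is tedious: one must track the scalar factors $(1\pm\lambda)$ and $\pm\mu$ that accumulate, continually use $h\phi + \phi h = 0$ and $h^2 X_{\pm\lambda} = (1-\kappa)X_{\pm\lambda}$ to collapse quadratic-in-$h$ expressions, and distinguish carefully the three eigenspaces when simplifying inner products like $g(\phi Y_{\lambda}, Z_{-\lambda})$. With careful accounting of these, the formulas \eqref{eq:curkum} fall out exactly as stated.
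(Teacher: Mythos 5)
The paper does not prove this statement: it is quoted as background from \cite{KUM}, so there is no in-paper proof to compare against. Your outline is a faithful reconstruction of the argument in that reference --- differentiate the nullity condition and combine with the standard contact-metric identities ($\nabla_X\xi=-\phi X-\phi hX$, $h\phi+\phi h=0$, $h^2=(\kappa-1)\phi^2$) to obtain $\nabla h$ and $\nabla\phi$, then compute $R$ case by case on the eigendistributions $\mc{D}_{\pm\lambda}$ using their integrability and the fact that $\phi$ interchanges them --- and I see no gap in the plan beyond the computational bookkeeping you already acknowledge.
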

A classification of $3$-dimensional $\ku$-manifolds was also given (see \cite[Theorem 3]{KUM}). However, the local classification for all dimensions was given in \cite{BI}, where Boeckx defines a quantity called the index of a $\ku$-manifold $M$, which is denoted by $I_{M}$ and is given by
\begin{equation}\label{eq:BI}
    I_{M}=\frac{1-\frac{\mu}{2}}{\sqrt{1-\kappa}}.
\end{equation}
It was shown that the index is invariant under $\mc{D}$-homothetic transformations and that the index determines $\ku$-manifolds locally, in the following sense.
\begin{theorem}[\cite{BI}]\label{thm:BI}
    Let $(M_{i}, \eta_{i}, g_{i}, \phi_{i})$ be $(\kappa_{i},\mu_{i})$-manifolds for $i\in\{1,2\}$. Then, $I_{M_{1}} = I_{M_{2}}$ if and only if the two spaces are locally isometric up to a $\mc{D}$-homothetic transformation. In particular, if the spaces are simply connected and complete, then they are isometric up to a $\mc{D}$-homothetic transformation.
\end{theorem}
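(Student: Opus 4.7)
The plan is to establish both directions by leveraging the structure results for non-Sasakian $\ku$-manifolds recorded in Theorem \ref{thm:curkum}, together with the $\mc{D}$-homothetic transformation formulas \eqref{eq:Dhomo}, \eqref{eq:Dhomo2} and \eqref{eq:kuDhomo}. First I would verify that $I_M$ is invariant under $\mc{D}_{a}$-homothety by a direct computation: substituting $\kappa' = (\kappa+a^{2}-1)/a^{2}$ and $\mu' = (\mu+2a-2)/a$ yields $1-\mu'/2 = (1-\mu/2)/a$ and $\sqrt{1-\kappa'} = \sqrt{1-\kappa}/a$, so the ratio in \eqref{eq:BI} is unchanged. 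This, together with the fact that $\kappa$ and $\mu$ are intrinsic to the contact metric data, already gives the easy direction: if the two spaces are locally isometric up to a $\mc{D}$-homothetic transformation, their invariants must agree.

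The substantial direction is the converse. Assume $I_{M_{1}} = I_{M_{2}}$ and (treating first the generic case) that both manifolds are non-Sasakian, so $\kappa_{i} < 1$. The next step is to normalize: for any prescribed target pair $(\kappa_{0},\mu_{0})$ with $I = I_{M_{1}}$, solving \eqref{eq:kuDhomo} for $a > 0$ via $\sqrt{1-\kappa_{i}}/a = \sqrt{1-\kappa_{0}}$ fixes $a$, and then the equality of invariants forces the corresponding $\mu'$ to equal $\mu_{0}$ automatically. Thus, after suitable $\mc{D}$-homotheties, we may assume $M_{1}$ and $M_{2}$ share the same constants $(\kappa,\mu)$.

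The core of the proof is then to show that two non-Sasakian $(\kappa,\mu)$-manifolds of the same dimension are locally isometric. For this I would exploit the decomposition $TM = \mb{R}\xi \oplus \mc{D}_{\lambda} \oplus \mc{D}_{-\lambda}$ from Theorem \ref{thm:curkum}, where $\lambda = \sqrt{1-\kappa}$. On each $M_{i}$ one picks an adapted orthonormal frame of the form $\{\xi, e_{1},\dots,e_{n}, \phi e_{1},\dots,\phi e_{n}\}$ with $e_{j}\in \mc{D}_{\lambda}$ and $\phi e_{j}\in \mc{D}_{-\lambda}$. In this frame all components of $\eta, g, \phi, h$ are universal functions of $\kappa$ and $\mu$ alone, and the six families of curvature components listed in \eqref{eq:curkum} depend only on $\kappa$ and $\mu$. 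Writing the Koszul formula using the identities $\nabla_{X}\xi = -\phi X - \phi h X$ and the bracket relations among $\mc{D}_{\lambda}, \mc{D}_{-\lambda}, \mb{R}\xi$ from \cite{KUM}, one checks that the Christoffel symbols in the adapted frame are likewise universal functions of $\kappa$ and $\mu$. A linear isometry between such frames at chosen base points $p_{1}\in M_{1}$, $p_{2}\in M_{2}$ therefore preserves all curvature and connection data, and Cartan's local isometry theorem extends it to a local isometry intertwining the contact metric structures. The global statement in the simply connected complete case follows by the Cartan--Ambrose--Hicks theorem.

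The main obstacle I anticipate is the verification that not merely the curvature but also the Levi-Civita connection in the adapted frame is determined by $(\kappa,\mu)$; Theorem \ref{thm:curkum} alone does not suffice to feed directly into Cartan's criterion, so a careful analysis of how $\mc{D}_{\lambda}$ and $\mc{D}_{-\lambda}$ interact under $\nabla$, combined with the structural identities for $h$ and $\phi$ on $\ku$-manifolds, is the technical heart of the argument. Once that frame-level rigidity is in hand, the rest of the proof runs on standard Cartan-type machinery.
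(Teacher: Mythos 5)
This is a statement the paper imports from \cite{BI} without proof, so there is no in-paper argument to compare against; I can only assess your sketch on its own terms. The first half of your proposal is correct and complete: the computation $1-\kappa' = (1-\kappa)/a^{2}$ and $1-\mu'/2 = (1-\mu/2)/a$ does show that $I_{M}$ is a $\mc{D}$-homothety invariant, and the normalization step (choosing $a$ so that $\lambda/a$ hits a prescribed value, whence equality of indices forces the $\mu$'s to agree) correctly reduces the converse to showing that two non-Sasakian $(\kappa,\mu)$-manifolds of the same dimension with the \emph{same} constants are locally isometric. Note also that the statement only makes sense for non-Sasakian spaces ($\kappa<1$), since \eqref{eq:BI} divides by $\sqrt{1-\kappa}$; your ``generic case'' is in fact the only case, and you never need to return to the Sasakian one.

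The genuine gap is in the core step. You assert that in an adapted orthonormal frame $\{\xi, e_{1},\dots,e_{n},\phi e_{1},\dots,\phi e_{n}\}$ with $e_{j}\in\mc{D}_{\lambda}$, ``one checks that the Christoffel symbols \ldots are universal functions of $\kappa$ and $\mu$.'' For $n>1$ this is false for an \emph{arbitrary} such frame: there is an $O(n)$-family of choices of $e_{1},\dots,e_{n}$ inside $\mc{D}_{\lambda}$ at each point, and the connection coefficients internal to $\mc{D}_{\lambda}$ (equivalently, the intrinsic geometry of the integral leaves of $\mc{D}_{\lambda}\oplus\mb{R}\xi$ and $\mc{D}_{-\lambda}\oplus\mb{R}\xi$) are not read off from Theorem \ref{thm:curkum} or from $\nabla_{X}\xi=-\phi X-\phi hX$ alone. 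Pinning them down---by showing these leaves are totally geodesic of constant curvature, that the space is locally homogeneous, and by producing a canonically normalized frame in which the structure constants depend only on $(\kappa,\mu)$---is precisely the content of Boeckx's classification; it cannot be dispatched as a routine Koszul computation. Relatedly, Cartan's local isometry theorem requires agreement of curvature under parallel transport along (broken) geodesics emanating from the base points, not merely a curvature-preserving linear isometry at one point, so you would in any case need control of $\nabla R$ or local homogeneity before the ``standard Cartan-type machinery'' applies. As written, the proposal identifies the right obstacle but does not overcome it, so the proof is incomplete at its decisive step.
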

\subsection{Riemannian submersions}\hfill

The main results of the article involve curvature computations using Riemannian submersions. Here, we briefly recall the notions associated with the same. Let $\pi:M\rightarrow N$ be a smooth submersion. The Kernel of the differential of $\pi$ defines a distribution, which is known as the vertical distribution and is denoted by $\mc{V}$. If $M$ is a Riemannian manifold, then the orthogonal complement of $\mc{V}$ is called the horizontal distribution and is denoted by $\mc{H}$. In \cite{ONCUR}, O'Neill defined and studied Riemannian submersions.
\begin{definition}[\cite{ONCUR}]\label{def:Riemsub}
    A smooth submersion $\pi: (M,\overline{g}) \rightarrow (N,g)$ is said to be a Riemannian submersion if $d\pi$ is an isometry from $(\mc{H}_{x},\overline{g}_{x})$ to $(T_{\pi x}N,g_{\pi x})$, for every $x\in M$.
\end{definition}
\begin{notation}
    Throughout the article, in the context of a Riemannian submersion $\pi : (M,\bar{g}) \rightarrow (N,g)$, $\mc{H}(V)$ and $\mc{V}(V)$ will be used to denote the horizontal and vertical components of a vector field $V$. The quantities associated with the manifold $M$ will have an overhead line, the quantities associated with the fiber over an element $y\in N$ will have a superscript $y$. For example, $\overline{R}, R^{y}$ and $R$ will denote the curvature of the manifold $M$, the curvature of the fiber $\pi^{-1}(y)$, and the curvature of $N$, respectively.
\end{notation}
 The author in \cite{ONCUR} also defines the following tensor fields associated to a Riemannian submersion.
\begin{definition}[\cite{ONCUR}]\label{def:TandA}
    Given a Riemannian submersion $\pi : (M,\overline{g}) \rightarrow (N,g)$, define (2,1)-tensors $T$ and $A$ by the formulae
    \begin{flalign}
        T_{E_{1}}E_{2} &= \mc{H}\overline{\nabla}_{\mc{V}E_{1}}\mc{V}E_{2} + \mc{V}\overline{\nabla}_{\mc{V}E_{1}}\mc{H}E_{2}\label{eq:T}\\
        A_{E_{1}}E_{2} &= \mc{H}\overline{\nabla}_{\mc{H}E_{1}}\mc{V}E_{2} + \mc{V}\overline{\nabla}_{\mc{H}E_{1}}\mc{H}E_{2}\label{eq:A}
    \end{flalign}
    These tensor fields are called the fundamental tensor fields associated with the Riemannian submersion $\pi$.
\end{definition}
It is known that the tensor $T$ is identically zero if and only if each fibre is totally geodesic. On the other hand, the tensor $A$ is identically zero if and only if the horizontal distribution is integrable. Further, the following formulae can be used to compute $T$ and $A$.
\begin{proposition}\label{prop:TandAformula}
    Suppose $\pi: (M, \overline{g})\rightarrow (N,g)$ is a Riemannian submersion. For horizontal vector fields $X,Y$ and vertical vector fields $U,V$, we have 
    \begin{gather}
        T_{X}U = T_{X}Y= 0\\
        T_{V}U =T_{U}V = \mc{H}\overline{\nabla}_{U}V,\quad T_{U}X = \mc{V}\overline{\nabla}_{U}X\\
        A_{U}X = A_{U}V = 0 \\
        A_{X}Y = -A_{Y}X = \mc{V}\overline{\nabla}_{X}Y,\quad A_{X}U = \mc{H}\overline{\nabla}_{X}U
    \end{gather}
\end{proposition}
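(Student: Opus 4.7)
The plan is to separate the identities into those that follow by direct substitution into \eqref{eq:T} and \eqref{eq:A}, and the two that require a further argument. For any horizontal $X, Y$ and vertical $U, V$ we have $\mc{V}X = \mc{V}Y = 0$ and $\mc{H}U = \mc{H}V = 0$; plugging these into \eqref{eq:T} immediately gives $T_X U = T_X Y = 0$, $T_U X = \mc{V}\overline{\nabla}_U X$, $T_V U = \mc{H}\overline{\nabla}_V U$, and $T_U V = \mc{H}\overline{\nabla}_U V$, while the analogous substitutions in \eqref{eq:A} give $A_U X = A_U V = 0$, $A_X U = \mc{H}\overline{\nabla}_X U$, $A_X Y = \mc{V}\overline{\nabla}_X Y$, and $A_Y X = \mc{V}\overline{\nabla}_Y X$. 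Thus it remains to prove (i) $\mc{H}\overline{\nabla}_V U = \mc{H}\overline{\nabla}_U V$ for vertical $U, V$, and (ii) $\mc{V}\overline{\nabla}_X Y = -\mc{V}\overline{\nabla}_Y X$ for horizontal $X, Y$.

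For (i) I would use that the fibres of $\pi$ are integral submanifolds of $\mc{V}$, so $[U, V]$ is vertical whenever $U, V$ are; combined with the torsion-freeness of $\overline{\nabla}$, this gives $\mc{H}\overline{\nabla}_U V - \mc{H}\overline{\nabla}_V U = \mc{H}[U, V] = 0$. This is the easy half.

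For (ii), the nontrivial half, I would first observe that $A$ is pointwise tensorial in both arguments (checkable directly from \eqref{eq:A} by a standard $C^\infty(M)$-linearity computation), so it suffices to verify the identity when $X, Y$ are basic horizontal vector fields, that is, horizontal lifts of vector fields $\underline{X}, \underline{Y}$ on $N$. For such $X, Y$ and an arbitrary vertical $V$, I would then apply the Koszul formula to $\overline{g}(\overline{\nabla}_X Y, V)$ and observe that almost every term collapses: the metric terms $X\overline{g}(Y, V)$ and $Y\overline{g}(X, V)$ vanish by horizontality of $X, Y$; the term $V\overline{g}(X, Y)$ vanishes because $\overline{g}(X, Y) = g(\underline{X}, \underline{Y}) \circ \pi$ is constant along the flow of any vertical vector field; and the bracket terms $\overline{g}([X, V], Y)$ and $\overline{g}([Y, V], X)$ vanish because a basic horizontal vector field has vertical Lie bracket with every vertical vector field, which is $\pi$-relatedness of Lie brackets applied to $X \sim \underline{X}$ and $V \sim 0$. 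What survives is $2\overline{g}(\overline{\nabla}_X Y, V) = \overline{g}([X, Y], V)$, so $\mc{V}\overline{\nabla}_X Y = \tfrac{1}{2}\mc{V}[X, Y]$, an expression manifestly antisymmetric in $X$ and $Y$. The main obstacle is the auxiliary fact that basic-vertical brackets are vertical; once that is in place, the antisymmetry of $A$ falls out of the Koszul computation as a routine bookkeeping exercise.
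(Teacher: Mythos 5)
Your proof is correct and complete. The paper does not actually prove this proposition --- it recalls it as a standard fact from O'Neill's paper \cite{ONCUR} --- and your argument (direct substitution into the defining formulae \eqref{eq:T} and \eqref{eq:A}, integrability of the fibres plus torsion-freeness for the symmetry $T_U V = T_V U$ on vertical fields, and tensoriality of $A$ together with the Koszul-formula computation on basic lifts yielding $\mc{V}\overline{\nabla}_X Y = \tfrac{1}{2}\mc{V}[X,Y]$ for the antisymmetry on horizontal fields) is precisely the classical one from that reference.
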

The relationship between the curvature of the domain and range of a Riemannian submersion can be formulated in terms of the fundamental tensors $T$ and $A$. Since the Riemannian submersion that we consider in this article has a one-dimensional base space, and integrable horizontal distribution, we recollect the formulae for curvature only for this specific case. For the complete set of relations in the general case, we refer the reader to \cite{ONCUR}.
\begin{theorem}[O'Neill's formulae for curvature (\cite{ONCUR})]\label{thm:curvoneill}
    Let $\pi: (M,\bar{g})\rightarrow (N,g)$ be a Riemannian submersion such that $\dim (N) =1$ and the horizontal distribution $\mc{H}$ is integrable. Then, for horizontal vector fields $X_{i}$ and vertical vector fields $V_{i}$ (for $i\in\{1,2,3,4\}$, and $y\in N$,
    \begin{flalign}\label{eq:curvoneill}
        (V_{1}, V_{2}, V_{3}, V_{4}) =& \bar{g}(R^{y}(V_{1}, V_{2})V_{3},V_{4}) + \bar{g}(T_{V_{1}}V_{3},T_{V_{2}}V_{4}) - \bar{g}(T_{V_{2}}V_{3},T_{V_{1}}V_{4}),\notag\\
        (V_{1},V_{2},V_{3},X_{1}) =& \Bar{g}((\Bar{\nabla}_{V_{1}}T)_{V_{2}}V_{3},X_{1}) - \Bar{g}((\Bar{\nabla}_{V_{2}}T)_{V_{1}}V_{3},X_{1})\notag \\
        (X_{1},V_{1},X_{2},V_{2}) =& \bar{g}(T_{V_{1}}X_{1} ,T_{V_{2}}X_{2})- \Bar{g}((\Bar{\nabla}_{X_{1}}T)_{V_{1}}V_{2},X_{2})  \notag\\
        (V_{1}, V_{2}, X_{1}, X_{2}) =& \bar{g}(T_{V_{1}}X_{1} ,T_{V_{2}}X_{2}) - \bar{g}(T_{V_{2}}X_{1} ,T_{V_{1}}X_{2})
    \end{flalign}
\end{theorem}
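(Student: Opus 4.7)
The plan is to derive these identities as the specialization of O'Neill's general curvature formulas for Riemannian submersions (\cite{ONCUR}) to the case $\dim(N)=1$ with integrable horizontal distribution. The key reduction is that under these hypotheses the fundamental tensor $A$ vanishes identically, so every summand in O'Neill's general relations that contains $A$ or $\bar{\nabla}A$ disappears, leaving exactly the four stated formulas.

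First I would verify that $A \equiv 0$. Since $\dim N = 1$, the horizontal distribution $\mc{H}$ has rank one, and by hypothesis is integrable; hence for any horizontal vector fields $X_1, X_2$ the bracket $[X_1, X_2]$ lies in $\mc{H}$. Combined with the standard identity $\mc{V}\bar{\nabla}_{X_1}X_2 = \tfrac{1}{2}\mc{V}[X_1, X_2]$ (a consequence of metric compatibility and torsion-freeness), this gives $A_{X_1}X_2 = \mc{V}\bar{\nabla}_{X_1}X_2 = 0$ by Proposition \ref{prop:TandAformula}. The remaining component $A_X U$ for $X$ horizontal and $U$ vertical then vanishes by the skew-duality $\bar{g}(A_X U, Y) = -\bar{g}(U, A_X Y)$ for horizontal $Y$, which is derived from the Koszul formula and the previously established vanishing on pairs of horizontal vectors.

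With $A \equiv 0$ in hand, I would invoke O'Neill's general curvature identities and delete every term containing $A$ or $\bar{\nabla}A$. The all-vertical $(V_1, V_2, V_3, V_4)$ case reduces to the Gauss-type formula involving $R^y$ and two $T$-quadratic terms. The $(V_1, V_2, V_3, X_1)$ case yields the Codazzi-type identity involving $(\bar{\nabla}_V T)$. The mixed $(X_1, V_1, X_2, V_2)$ and $(V_1, V_2, X_1, X_2)$ cases reduce to the stated combinations of $T$-inner products plus at most one $(\bar{\nabla}_X T)$ term. The main point requiring care is to confirm that no hidden $A$-contribution survives inside the expansion $(\bar{\nabla}_{X_1}T)_{V_1}V_2 = \bar{\nabla}_{X_1}(T_{V_1}V_2) - T_{\bar{\nabla}_{X_1}V_1}V_2 - T_{V_1}\bar{\nabla}_{X_1}V_2$; since $\bar{\nabla}_{X_1}V_i = A_{X_1}V_i + \mc{V}\bar{\nabla}_{X_1}V_i$ collapses to its vertical part when $A=0$, every argument of $T$ remains vertical and the formulas take exactly the stated form. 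The conceptual obstacle is therefore confined to establishing $A \equiv 0$; everything else is bookkeeping within O'Neill's framework.
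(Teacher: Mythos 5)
Your proposal is correct and is exactly the derivation the paper intends: the theorem is stated without proof as a quotation of O'Neill's general curvature identities, specialized to a one-dimensional base with integrable horizontal distribution, and your route---establish $A\equiv 0$ (which here is essentially automatic, since a rank-one horizontal distribution forces $A_{X_1}X_2=\tfrac12\mc{V}[X_1,X_2]=0$ by skew-symmetry even without invoking integrability, and then $A_X U=0$ by duality) and delete every $A$- and $\bar\nabla A$-term from the general formulas---is the standard one. The only point worth flagging is that the fourth identity, for $(V_1,V_2,X_1,X_2)$, is not literally one of O'Neill's numbered equations $\{0\}$--$\{4\}$ but follows from the mixed identity together with the pair and first Bianchi symmetries of $\bar R$ (modulo the sign conventions for the symbol $(\cdot,\cdot,\cdot,\cdot)$, which the paper does not fix); your ``bookkeeping'' remark covers this, but it is the one step that is not pure term-deletion.
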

The Ricci curvature relations and the sectional curvature relations can be derived from this (see \cite{ONCUR}).
\section{The metric symplectization}\label{sec:TMS}
In this section, we will define the metric symplectization of a contact metric manifold. We begin with the proof of Theorem \ref{thm:lvfcmm}, which is the analogue of Lemma \ref{lem:hypLVF} in the contact metric category.

\begin{proof}[Proof of Theorem \ref{thm:lvfcmm}]
    Since $\omega$ is a symplectic form on $B$, there is an isomorphism
    \begin{flalign*}
        \Phi : TB &\rightarrow T^{*}B\\
        X &\mapsto (Y\mapsto \omega(Y,X))
    \end{flalign*}
    Let $M$ be a hypersurface in $B$ which is orthogonal to $Y$. Let $A_{Y}$ denote the covector field which takes value $1$ when evaluated on $Y$ and vanishes on vector fields in $M$. Using the fact that $Y$ is a Liouville vector field, it can be checked that the vector field $\Phi^{-1}(A_{Y})$ is the Reeb vector field for the induced contact structure $\eta := i_{Y}\w$ on $M$. This vector field will henceforth be denoted by $\xi$. Let $J$ denote the almost complex structure on $B$ given by the compatibility of $g$ and $\w$. In order to compute $J\xi$, note that, for any vector field $X$ in $M$,
    \begin{equation*}
        g(X,J\xi) = \omega(X,\xi) = A_{Y}(X)=0
    \end{equation*}
    Thus, $J\xi$ lies along the line field which is orthogonal to $M$. By our assumption of orthogonality, we must have that $J\xi$ lies along the direction of the vector field $Y$. Therefore, $J\xi = cY$, for some $c\in\mathbb{R}\setminus \{0\}$. In particular,
    \begin{equation*}
        g(\xi,\xi) = g(J\xi,J\xi)=c^{2}
    \end{equation*}
    On the other hand, the compatibility of the structure tensors of the symplectic metric structures gives us
    \begin{equation*}
        g(\xi,\xi) = -\omega(\xi,J\xi) = A_{Y}(J\xi) = c
    \end{equation*}
    From the equations above, we must have $c^{2}=c$. Since $c$ cannot be zero, we have $c=1$. Consequently, $J\xi = Y$. Since $J$ is an almost complex structure, we have $JY=-\xi$. For any vector field $X$ in the contact distribution, note that
    \begin{gather*}
        g(Y,JX) = \omega(Y,X) =i_{Y}\omega(X) = \eta (X) = 0\\
        g(\xi,JX)=-\omega(X,\xi)=-A_{Y}(X)=0
    \end{gather*}
    
    Thus, we can conclude that $J$ maps the contact distribution of $M$ to itself. Therefore, we can define a $(1,1)$-tensor field $\phi$ on $M$ which is equal to $J$ on the contact distribution and vanishes on the line field determined by $\xi$. We claim that $(M,\eta, \phi, g)$ is a contact metric structure. In order to see this, we need to verify the conditions specified in Definition \ref{def:cmm}. We have,
    \begin{equation*}
        g(X,\xi) = -\omega(\xi , JX) = A_{Y}(JX)
    \end{equation*}
    Using the values for $J$ obtained above, we can check that $g(X,\xi) = \eta (X)$ for all vector fields $X$ in $M$. The condition $\phi^{2} = -I + \eta \otimes \xi$ follows directly from the fact that $J$ is an almost complex structure and $\phi \xi = 0$. We also have, for $X,X'\in\V{M}$,
    \begin{equation*}
        d\eta(X,X')=(d\circ i_{Y}\omega)(X,X') = \omega(X,X') = g(X,JX')=g(X,\phi X')
    \end{equation*}
    Therefore, $(M,\eta, \phi, g)$ is a contact metric manifold.
\end{proof}
\begin{remark}\label{rem:hypcont}
    A standard example of contact hypersurfaces transverse to Liouville vector fields in symplectic manifolds is given in the context of the symplectization. Given the symplectization $(M\times \mathbb{R}, d(e^{2t}\eta))$, it is known that the coordinate vector field $\partial_{t}$ on $\mathbb{R}$ is a Liouville vector field. Consequently, the hypersurfaces $(M\times \{t_{0}\})$ are of contact type, with contact form $i_{\partial_{t}}\w$. However, in the metric category, the natural almost complex structure on the symplectization (see Equation \eqref{eq:natalmcom}) does not induce contact metric structures on the hypersurfaces $(M\times \{t_{0}\})$, for $t_{0}\neq 0$.
\end{remark} 

This motivates us to look for metric structures on the symplectization which satisfy the hypothesis of Theorem \ref{thm:lvfcmm}. To this end, we have Theorem \ref{thm:uniqmetsymp}, which is proved below.
\begin{proof}[Proof of Theorem \ref{thm:uniqmetsymp}]
    Let $(M\times \mb{R}, \w = d(e^{2t}\eta))$ be the symplectization of the contact manifold $(M, \eta)$. We know that the Liouville vector field $\partial_{t}$ induces a contact form $\eta_{t_{0}}=i_{\partial_{t}}\w $ on $(M\times \{t_{0}\})$ for every $t_{0}\in \mb{R}$. For $X\in\V{M}$,
    \begin{equation*}
        i_{\partial_{t}}\w(X) = \w(\partial_{t},X) = e^{2t_{0}}\eta(X)
    \end{equation*}
    Therefore, we have the contact form $\eta_{t}:= e^{2t}\eta$ on the hypersurface $M\times \{t\}$. The corresponding Reeb vector field is given by $\xi_{t}:= e^{-2t}\xi$.
    
    Suppose $J$ is an almost complex structure on $(M\times \mb{R})$ which is compatible with the symplectic form $\w$ and induces a Riemannian metric $\Bar{g}$ which satisfies the conditions specified in the statement of the theorem. Let $J\partial_{t} = X_{0} + f_{1}\xi + f_{2}\partial_{t}$, for some $X_{0}\in Ker(\eta_{t})$ and $f_{1}, f_{2} \in C^{\infty}(M\times \mb{R})$. By the assumption of ortogonality, we get 
    \begin{equation*}
        0=\Bar{g}(\partial_{t}, \xi) = \Bar{g}(J\partial_{t},J\xi)=\w(J\partial_{t}, \xi) = e^{2t}dt(J\partial_{t}) = f_{2}
    \end{equation*}
    On the other hand, for any vector field $X$ in the contact distribution,
    \begin{equation*}
        0=\Bar{g}(\partial_{t}, X) = \Bar{g}(J\partial_{t},JX)=\w(J\partial_{t}, X) = e^{2t}d\eta(J\partial_{t},X) = e^{2t}d\eta(X_{0},X)
    \end{equation*}
    Since the form $d\eta$ is non-degenerate on the contact distribution, we have $X_{0} = 0$. Thus, $J\partial_{t} = f_{1}\xi$ for some non-vanishing function $f_{1}\in C^{\infty}(M)$. Since $J$ is an almost complex structure, this also means that $J\xi = \frac{-\partial_{t}}{f_{1}}$. We now use the fact that $\partial_{t}$ is a Liouville vector field of unit length.
    \begin{equation*}
        1=\Bar{g}(\partial_{t},\partial_{t}) = \Bar{g}(J\partial_{t},J\partial_{t}) = \w(J\partial_{t},\partial_{t})=f_{1}\w(\xi,\partial_{t}) = -f_{1}e^{2t}
    \end{equation*}
    Hence, we can conclude that the only possible almost complex structure $J$ satisfying the required conditions is given by
    \begin{equation*}
        JX = \phi X\text{, for }X\in Ker(\eta_{t}),\qquad J\xi_{t}=\partial_{t},\qquad J\partial_{t}=-\xi_{t}.
    \end{equation*}
    It is easy to check the almost complex structure defined above is compatible with the symplectic form $\w$ and thus defines a symplectic metric structure on $(M\times \mb{R}, \w)$.
\end{proof}
The compatibility of the almost complex structure and the symplectic form uniquely determines the Riemannian metric $\bar{g}$ on the symplectization. Since the vector field $\partial_{t}$ is of unit length and orthogonal to the slices $(M\times \{t\})$, we have $\bar{g} = g_{t}+dt^{2}$, for a Riemannian metirc $g_{t}$ on the slice $(M\times \{t\})$. By Theorem \ref{thm:lvfcmm}, we can conclude that $(M\times \{t\}, \eta_{t}, g_{t}, \phi)$ is a contact metric manifold for every $t\in \mb{R}$. Using the compatibility conditions, we obtain
\begin{equation}\label{eq:slicemet}
    g_{t}=e^{2t}g+e^{2t}(e^{2t}-1)\eta \otimes \eta .
\end{equation}
Throughout the article $(M_{t}, eta_{t}, g_{t}, phi)$ will be used to denote the induced contact metric structure on $(M\times \{t\})$.
\begin{definition}\label{def:metsymp}
    Let $(M, \eta, g,\phi)$ be a contact metric manifold. The symplectic metric manifold $(M\times \mathbb{R},\w, \Bar{g}, \phi)$ as defined above is called the \textit{metric symplectization} of the given contact metric manifold.
\end{definition}
Next, we note that for $t \in \mathbb{R}$, the contact metric hypersurface $M\times \{t\}$ in the metric symplectization has structure tensors which is related to the original contact metric structure (or equivalently, the contact metric structure on $M\times \{0\}$) by
\begin{equation}\label{eq:dhomoslice}
    \eta_{t} = e^{2t}\eta, \qquad g_{t}=e^{2t}g+e^{2t}(e^{2t}-1)\eta \otimes \eta. 
\end{equation}
This proves that the slice at $t$ is a $\mc{D}_{e^{2t}}$-homothetic transformation of the original contact metric manifold (or the slice at $0$).
\begin{remark}\label{rem:metdhomo}
    More generally, for $t_{1}, t_{2}\in \mb{R}$, the slice at $t_{2}$ is a $\mc{D}$-homothetic transformation of the slice at $t_{1}$. Thus, the metric symplectization of a contact metric manifold captures the entire spectrum of its $\mc{D}$-homothetic transformations.
\end{remark}

Since $(\kappa, \mu)$-manifolds in particular are locally classified up to $\mc{D}$-homothetic transformations, we now focus on this class of contact metric manifolds and their metric symplectizations.
\section{Curvature properties of the metric symplectization}\label{sec:CP}
In this section, we will derive the relationship between the curvature of a contact metric manifold and the curvature of its metric symplectization. In particular, since the curvature of non-Sasakian $\ku$-manifolds are completely determined by the constants $\kappa$ and $\mu$ (see Theorem \ref{thm:curkum}), we obtain a characterization of the $\ku$ condition in terms of structure tensors of its metric symplectization. As an application of the curvature relations, we also present a rigidity result for $\ku$-manifolds under certain geometric constraints on the metric symplectization.

From the description of the Riemannian metric on the symplectization $\bar{g}=g_{t}+dt^{2}$, it is clear that the projection map $\pi_{2}: (M\times \mb{R},\bar{g}) \rightarrow (\mb{R},dt^{2})$ is a Riemannian submersion. In this case, the vertical distribution at a point $(x,t)$ is given by the tangent space $T_{x}M$ of $M$ and the horizontal distribution is given by the line field determined by the coordinate vector $\partial_{t}$. Using O'Neill's formulae (see \cite{ONCUR}), we can compute the curvature of the total space $M\times \mb{R}$ in terms of the curvature of the horizontal distribution, the curvature of the vertical distribution, and the fundamental tensors $T$ and $A$ associated with the Riemannian submersion $\pi_{2}$. The curvature of the horizontal distribution is zero since it is a one-dimensional space. Also, since the horizontal distribution is integrable for the Riemannian submersion $\pi_{2}$ under consideration, we have $A=0$.
\begin{Lemma}\label{lem:tensorT}
    The fundamental tensor $T$ associated with the Riemannian submersion $\pi_{2}$ satisfies the following relations. For $X,Y,Z\in \V{M}$,
    \begin{itemize}
        \item $T_{X}Y = -(\bar{g}(X,Y) + \eta_{t}(X) \eta_{t}(Y))\partial_{t}$,
        \item $T_{X}\partial_{t} = X+\eta_{t}(X)\xi_{t}$
    \end{itemize}
\end{Lemma}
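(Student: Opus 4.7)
The plan is to compute the horizontal and vertical components of $\bar{\nabla}_{X}Y$ and $\bar{\nabla}_{X}\partial_{t}$ directly via the Koszul formula, exploiting the product structure of $(M\times \mb{R}, \bar{g}=g_{t}+dt^{2})$. Throughout, for $X\in\V{M}$ viewed as a vector field on the total space (independent of $t$), one has $\bar{g}(X,\partial_{t})=0$ and $[X,\partial_{t}]=0$; together with $[X,Y]\in\V{M}$ these kill many terms in Koszul.

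For the first identity, since $T_{X}Y=\mc{H}\bar{\nabla}_{X}Y$ by Proposition \ref{prop:TandAformula}, it suffices to compute $\bar{g}(\bar{\nabla}_{X}Y,\partial_{t})$. The Koszul formula collapses to
\begin{equation*}
2\bar{g}(\bar{\nabla}_{X}Y,\partial_{t})=-\partial_{t}\bar{g}(X,Y)=-\partial_{t}g_{t}(X,Y).
\end{equation*}
Using the explicit formula $g_{t}=e^{2t}g+e^{2t}(e^{2t}-1)\eta\otimes\eta$ from \eqref{eq:slicemet}, a short $t$-differentiation yields $\partial_{t}g_{t}(X,Y)=2\bigl(g_{t}(X,Y)+e^{4t}\eta(X)\eta(Y)\bigr)=2\bigl(\bar{g}(X,Y)+\eta_{t}(X)\eta_{t}(Y)\bigr)$, since $\eta_{t}=e^{2t}\eta$. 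Dividing by $2$ and using the unit length of $\partial_{t}$ gives the claimed formula for $T_{X}Y$.

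For the second identity, the cleanest route is to invoke the well-known skew-symmetry of $T_{E}$ with respect to $\bar{g}$ established by O'Neill: for vertical $X,Y$ and horizontal $\partial_{t}$,
\begin{equation*}
\bar{g}(T_{X}\partial_{t},Y)=-\bar{g}(\partial_{t},T_{X}Y)=\bar{g}(X,Y)+\eta_{t}(X)\eta_{t}(Y).
\end{equation*}
Since $\bar{g}(\xi_{t},Y)=\eta_{t}(Y)$ by the contact metric compatibility on the slice $M_{t}$, the vertical vector field $X+\eta_{t}(X)\xi_{t}$ has precisely the same inner product with every vertical $Y$. As $T_{X}\partial_{t}$ is vertical by Proposition \ref{prop:TandAformula}, non-degeneracy of $g_{t}$ on $\V{M_{t}}$ forces $T_{X}\partial_{t}=X+\eta_{t}(X)\xi_{t}$.

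There is no real obstacle beyond a clean bookkeeping of the $t$\nb-derivative of $g_{t}$; the remainder is forced by Koszul and the skew-symmetry of $T$. If one prefers to avoid citing skew-symmetry, the second formula can equally well be obtained by writing $\bar{\nabla}_{X}\partial_{t}=\bar{\nabla}_{\partial_{t}}X$ (as $[X,\partial_{t}]=0$) and extracting its vertical part from $\partial_{t}\bar{g}(X,Z)=\bar{g}(\bar{\nabla}_{\partial_{t}}X,Z)+\bar{g}(X,\bar{\nabla}_{\partial_{t}}Z)$ for arbitrary vertical $Z$, which leads to the same expression.
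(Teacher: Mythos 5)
Your proof is correct: the Koszul-formula computation of $\bar{g}(\bar{\nabla}_{X}Y,\partial_{t})=-\tfrac{1}{2}\partial_{t}g_{t}(X,Y)$, the $t$-differentiation of $g_{t}=e^{2t}g+e^{2t}(e^{2t}-1)\eta\otimes\eta$, and the use of $T$'s skew-symmetry together with $g_{t}(\xi_{t},Y)=\eta_{t}(Y)$ all check out. The paper omits the proof of this lemma entirely, and what you have written is precisely the standard computation it implicitly relies on, so there is no divergence in approach to report.
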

The tensor $T$ is fully determined by the above result and the properties of $T$ stated in Proposition \ref{prop:TandAformula}. Using the values for obtained above and the curvature relations in Theorem \ref{thm:curvoneill}, we obtain the following result.
\begin{theorem}\label{thm:currel}
    The curvatures of the metric symplectization and the contact metric manifold satisfy the following relations.
    \begin{align}
        \mc{V}(\bar{R}(X,Y)Z) =& R^{t}(X,Y)Z+ g_{t}(X+\eta_{t}(X)\xi_{t},Z)Y \notag \\
        &- g_{t}(Y+\eta_{t}(Y)\xi_{t},Z)X + g_{t}(\eta_{t}(Y)X-\eta_{t}(X)Y,Z)\xi_{t}\label{eq:cur1}\\
        \mc{H}(\bar{R}(X,Y)Z) =& - g_{t}(\phi Z,\eta_{t}(Y) X - \eta_{t}(X) Y +\eta_{t}(Y) h X -\eta_{t}(X)  h Y)\notag \\
        &+2\eta_{t}(Z)g_{t}(Y,\phi X)\label{eq:cur2}\\
        \bar{g}(\bar{R}(\partial_{t},X)\partial_{t},Y)=&g_{t}(X,Y) + 3\eta_{t}(X)\eta_{t}(Y)\label{eq:cur3}\\
        \bar{g}(\bar{R}(X,Y)\partial_{t},\partial_{t}) =& 0\label{eq:cur4}
    \end{align}
\end{theorem}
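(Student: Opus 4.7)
The plan is to view $\pi_2 : (M \times \mathbb{R}, \bar{g}) \to (\mathbb{R}, dt^2)$ as a Riemannian submersion and specialise each of the four O'Neill identities in Theorem \ref{thm:curvoneill}. Because the horizontal distribution is one-dimensional (spanned by the unit field $\partial_t$, hence integrable) and the base is one-dimensional, the O'Neill tensor $A$ vanishes identically and $R^{\mathbb{R}}=0$, so each identity reduces to an expression in $R^t$, the tensor $T$ (whose values are supplied by Lemma \ref{lem:tensorT}), and the covariant derivative $\bar{\nabla}T$. I would treat the four equations in turn.

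Equations \eqref{eq:cur1} and \eqref{eq:cur4} are essentially algebraic. For \eqref{eq:cur1}, pair $\bar{R}(X,Y)Z$ with a vertical test vector $W$ and apply the first O'Neill identity: substituting $T_X Z = -(g_t(X,Z)+\eta_t(X)\eta_t(Z))\partial_t$ yields $\bar{g}(T_X Z, T_Y W) = (g_t(X,Z)+\eta_t(X)\eta_t(Z))(g_t(Y,W)+\eta_t(Y)\eta_t(W))$; subtracting the $X \leftrightarrow Y$ swap, expanding, and using $g_t(\xi_t,W) = \eta_t(W)$, the terms regroup exactly into the vertical tensor on the right-hand side of \eqref{eq:cur1}. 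Equation \eqref{eq:cur4} falls out of the fourth O'Neill identity by the symmetry of $\bar{g}$: $\bar{g}(\bar{R}(X,Y)\partial_t, \partial_t) = \bar{g}(T_X\partial_t, T_Y\partial_t) - \bar{g}(T_Y\partial_t, T_X\partial_t) = 0$.

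For \eqref{eq:cur3}, the third O'Neill identity gives $\bar{g}(\bar{R}(\partial_t,X)\partial_t,Y) = \bar{g}(T_X\partial_t, T_Y\partial_t) - \bar{g}((\bar{\nabla}_{\partial_t}T)_X Y, \partial_t)$. The first summand equals $g_t(X+\eta_t(X)\xi_t, Y+\eta_t(Y)\xi_t) = g_t(X,Y) + 3\eta_t(X)\eta_t(Y)$ via Lemma \ref{lem:tensorT} and $g_t(\xi_t,\xi_t) = 1$. To dispatch the second summand I would first record $\bar{\nabla}_{\partial_t}\partial_t = 0$ (unit length horizontal field with geodesic orbits) and $\bar{\nabla}_{\partial_t}X = \bar{\nabla}_X\partial_t = X + \eta_t(X)\xi_t$ for vector fields $X$ lifted from $M$ (by torsion-freeness, since $[\partial_t,X]=0$ for such lifts). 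Expanding $(\bar{\nabla}_{\partial_t}T)_X Y$ by the Leibniz rule then produces a scalar multiple of $\partial_t$; differentiating the explicit expression $g_t = e^{2t}g + e^{2t}(e^{2t}-1)\eta\otimes\eta$ in $t$ supplies $\partial_t g_t(X,Y) = 2g_t(X,Y) + 2\eta_t(X)\eta_t(Y)$ and $\partial_t[\eta_t(X)\eta_t(Y)] = 4\eta_t(X)\eta_t(Y)$, and the ``derivative-of-$T$'' term cancels the two ``$T$-of-derivative'' terms exactly.

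The main obstacle is \eqref{eq:cur2}. The second O'Neill identity gives $\bar{g}(\bar{R}(X,Y)Z,\partial_t) = \bar{g}((\bar{\nabla}_X T)_Y Z,\partial_t) - \bar{g}((\bar{\nabla}_Y T)_X Z,\partial_t)$. I would expand $(\bar{\nabla}_X T)_Y Z = \bar{\nabla}_X(T_Y Z) - T_{\bar{\nabla}_X Y}Z - T_Y(\bar{\nabla}_X Z)$, decompose $\bar{\nabla}_X Y = \nabla^t_X Y + T_X Y$ into its vertical (Levi-Civita of the fibre metric $g_t$) and horizontal parts, and use both $T_{(T_X Y)}Z = 0$ (first argument horizontal, Proposition \ref{prop:TandAformula}) and the fact that $\bar{\nabla}_X\partial_t$ is purely vertical. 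Metric compatibility of $\nabla^t$ then collapses the horizontal component of $(\bar{\nabla}_X T)_Y Z$ to $-[(\nabla^t_X\eta_t)(Y)\eta_t(Z) + \eta_t(Y)(\nabla^t_X\eta_t)(Z)]\partial_t$. Antisymmetrising in $X,Y$, the first pair of terms reassembles into $-d\eta_t(X,Y)\eta_t(Z)$, which produces the $2\eta_t(Z)g_t(Y,\phi X)$ summand via the contact metric relation between $d\eta_t$ and $g_t(\cdot,\phi\cdot)$ on the slice. The remaining terms, after inserting the contact metric identity $\nabla^t_X\xi_t = -\phi X - \phi h X$ and repeatedly using the antisymmetry $g_t(\phi\cdot,\cdot) = -g_t(\cdot,\phi\cdot)$ together with the symmetry of $\phi h$ with respect to $g_t$ (a consequence of self-adjointness of $h$ and the anticommutation $\phi h + h\phi = 0$), consolidate into $-g_t(\phi Z, \eta_t(Y)X - \eta_t(X)Y + \eta_t(Y)h X - \eta_t(X) h Y)$. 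The careful sign-tracking through this final bookkeeping is where the bulk of the effort lies.
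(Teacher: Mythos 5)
Your proposal is correct and follows exactly the route the paper intends: the paper explicitly omits this proof as a ``lengthy but straightforward computation using O'Neill's formulae,'' and your argument supplies precisely that computation, using the vanishing of $A$, the values of $T$ from Lemma \ref{lem:tensorT}, and the four specialized O'Neill identities. I checked the key steps (the cancellation in $(\bar{\nabla}_{\partial_t}T)_XY=0$ via $\partial_t g_t(X,Y)=2g_t(X,Y)+2\eta_t(X)\eta_t(Y)$, and the reduction of the second identity to $-(\nabla^t_X\eta_t)(Y)\eta_t(Z)-\eta_t(Y)(\nabla^t_X\eta_t)(Z)$ followed by $\nabla^t_X\xi_t=-\phi X-\phi hX$) and they produce the stated formulas, with only the usual factor-of-two convention for $d\eta_t$ to keep straight in the $2\eta_t(Z)g_t(Y,\phi X)$ term.
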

The proof involves lengthy but straightforward computations using O'Neill's formulae and will be omitted.

In particular, for non-Sasakian $\ku$-manifolds, we use the $\ku$ condition \eqref{eq:kum} to prove Theorem \ref{thm:currelkum}.
\begin{proof}[Proof of Theorem \ref{thm:currelkum}]
    Suppose $(M,\eta, g, \phi)$ is a contact metric $\ku$-manifold. Substituting $Z=\xi_{t}$ in Equation \eqref{eq:cur1}, we see that
    \begin{align*}
        \mc{V}(\bar{R}(X,Y)\xi_{t}) =& R^{t}(X,Y)\xi_{t}+ g_{t}(X+\eta_{t}(X)\xi_{t},\xi_{t})Y \\
        &- g_{t}(Y+\eta_{t}(Y)\xi_{t},\xi_{t})X + g_{t}(\eta_{t}(Y)X-\eta_{t}(X)Y,\xi_{t})\xi_{t}\\
        =& (\kappa_{t}I+\mu_{t}h)(\eta_{t}(Y)X-\eta_{t}(X)Y)+2\eta_{t}(X)Y-2\eta_{t}(Y)X\\
        =& ((\kappa_{t}-2)I+\mu_{t}h)(\eta_{t}(Y)X-\eta_{t}(X)Y)
    \end{align*}
    Thus, Equation \eqref{eq:curclass} holds for all $t\in\mb{R}$ and $X,Y\in\V{M}$. For the converse, note that if Equation \eqref{eq:cur1} holds for the slice at $t$, then a computation similar to the one above shows that 
    \begin{equation*}
        R^{t}(X,Y)\xi_{t} = ((\tilde{\kappa}+2)I+\tilde{\mu}h)(\eta_{t}(Y)X-\eta_{t}(X)Y)
    \end{equation*}
    In other words, the slice at $t$ is a $(\tilde{\kappa}+2, \tilde{\mu})$-manifold. Since we have seen that every slice of the metric symplectization is a $\mc{D}$-homothetic transformation of every other slice, we can conclude that the slice at $t=0$ is a $\ku$-manifold. But the slice at $t=0$ is isomorphic to the contact metric manifold $M$. Therefore, $M$ is a $\ku$-manifold.

    The last statement is also a consequence of the observation that any two slices of the metric symplectization are $\mc{D}$-homothetic transformations of each other, and the fact that $\mc{D}$-homothetic transformation of a $(\kappa,\mu)$-manifold is a $(\kappa',\mu')$-manifold, for some $\kappa',\mu'\in\mb{R}$.
\end{proof}
As a corollary to Proposition \ref{thm:currel}, we obtain the relations between the Ricci curvature of a contact metric manifold and its metric symplectization.
\begin{corollary}\label{cor:Ricrel}
    Let $(M^{2n+1},\eta, g,\phi)$ be a contact metric manifold and $(M\times \mb{R}, \w , \bar{g}, J)$ be its metric symplectizaition. Let$\{e_{1},e_{2}, \dots ,e_{2n}\}$ be an orthonormal basis of the contact distribution of $M$. Then,
    \begin{gather*}
        \overline{Ric}(e_{i},e_{j}) = Ric^{t}(e_{i},e_{i}) - (2n+4)\delta_{ij},\\
        \overline{Ric}(e_{i},\xi_{t}) = Ric^{t}(e_{i},\xi_{t}), \quad \overline{Ric}(e_{i},\partial_{t}) = 0\\
        \overline{Ric}(\xi_{t},\partial_{t}) = 0,\quad \overline{Ric}(\xi_{t},\xi_{t}) = Ric^{t}(\xi_{t},\xi_{t})-4n-4\\
        \overline{Ric}(\partial_{t},\partial_{t}) = -2n-4
    \end{gather*}
\end{corollary}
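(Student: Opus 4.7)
The strategy is to trace the Riemann tensor $\bar{R}$ in a well-adapted orthonormal frame and to substitute the four curvature identities \eqref{eq:cur1}--\eqref{eq:cur4} of Theorem \ref{thm:currel}. At a point $(x,t) \in M \times \mb{R}$, the frame I would use is $\{f_{1}, \ldots, f_{2n}, \xi_{t}, \partial_{t}\}$, where $\{f_{1}, \ldots, f_{2n}\}$ is a $g_{t}$-orthonormal basis of the contact distribution of the slice $M_{t}$, $\xi_{t}$ is the slice Reeb field, and $\partial_{t}$ is the unit horizontal vector. By construction this frame is $\bar{g}$-orthonormal, so for each pair of basis vectors $(U,V)$ I would expand
\[
\overline{Ric}(U,V) = \sum_{i=1}^{2n} \bar{g}(\bar{R}(f_{i},U)V, f_{i}) + \bar{g}(\bar{R}(\xi_{t},U)V, \xi_{t}) + \bar{g}(\bar{R}(\partial_{t},U)V, \partial_{t})
\]
and substitute the appropriate identity into each summand.

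For vertical $U, V$ the first two summands are obtained by pairing the vertical-part formula \eqref{eq:cur1} against $f_{i}$ or $\xi_{t}$; together these recover $Ric^{t}(U,V)$ plus explicit correction terms built from $g_{t}$ and $\eta_{t}$. For the $\partial_{t}$-summand I would apply the curvature symmetry $\bar{g}(\bar{R}(\partial_{t},U)V,\partial_{t}) = -\bar{g}(\bar{R}(\partial_{t},U)\partial_{t},V)$ and invoke \eqref{eq:cur3}. When exactly one of $U, V$ equals $\partial_{t}$, the vertical summands vanish by \eqref{eq:cur2}: noting $\phi\xi_{t} = 0$ and that $\eta_{t}$ vanishes on the contact distribution, the surviving cross-terms all involve $g_{t}(\phi f_{k}, h f_{k})$, whose sum is zero because $\phi h + h\phi = 0$ gives $\operatorname{tr}(\phi h) = 0$ and $h\xi_{t} = 0$. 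The $\partial_{t}$-summand vanishes by \eqref{eq:cur4}, yielding $\overline{Ric}(e_{i}, \partial_{t}) = \overline{Ric}(\xi_{t}, \partial_{t}) = 0$.

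The principal obstacle is bookkeeping the correction terms in \eqref{eq:cur1}: because $\eta_{t}(f_{i}) = 0$ while $\eta_{t}(\xi_{t}) = 1$, the $\xi_{t}$-indexed summand carries extra $\eta_{t}\otimes\eta_{t}$ pieces that the $f_{i}$-indexed summands do not, and these pieces account for the difference in constants between the $(e_{i}, e_{j})$ and $(\xi_{t}, \xi_{t})$ formulae. As a sanity check, $\overline{Ric}(\partial_{t}, \partial_{t})$ can be read off entirely from \eqref{eq:cur3}: after the symmetry reduction it equals
\[
-\sum_{\alpha} \left(g_{t}(e_{\alpha}, e_{\alpha}) + 3\eta_{t}(e_{\alpha})^{2}\right) = -(2n+1) - 3 = -(2n+4),
\]
the extra $3$ coming from $3\eta_{t}(\xi_{t})^{2}$. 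The remaining formulae are then obtained by the same procedure with analogous, if more involved, bookkeeping of the $T$-tensor corrections.
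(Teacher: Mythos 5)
Your strategy --- tracing $\bar{R}$ over the adapted $\bar{g}$-orthonormal frame $\{f_{1},\dots,f_{2n},\xi_{t},\partial_{t}\}$ and substituting \eqref{eq:cur1}--\eqref{eq:cur4} --- is exactly the intended argument (the paper states the corollary with no written proof beyond ``as a corollary to Theorem \ref{thm:currel}''), and the parts you actually execute are correct: the vanishing of $\overline{Ric}(e_{i},\partial_{t})$ and $\overline{Ric}(\xi_{t},\partial_{t})$ via $\phi\xi_{t}=0$ and $\operatorname{tr}(\phi h)=0$, and the value $\overline{Ric}(\partial_{t},\partial_{t})=-2n-4$ read off from \eqref{eq:cur3}.

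The gap is that you defer precisely the computation that carries the content --- the diagonal vertical entries --- to ``analogous bookkeeping,'' and when that bookkeeping is carried out it does not return the constant asserted in the statement. Pairing \eqref{eq:cur1} against the $2n+1$ vertical frame vectors gives, for vertical $U,V$, the contribution $Ric^{t}(U,V)+\bigl(1-(2n+1)-1\bigr)g_{t}(U,V)+\bigl(1-(2n+1)+1\bigr)\eta_{t}(U)\eta_{t}(V)$, and the $\partial_{t}$-summand contributes $-g_{t}(U,V)-3\eta_{t}(U)\eta_{t}(V)$ by \eqref{eq:cur3}; the total is $Ric^{t}(U,V)-(2n+2)\bigl(g_{t}(U,V)+\eta_{t}(U)\eta_{t}(V)\bigr)$. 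This reproduces $\overline{Ric}(e_{i},\xi_{t})=Ric^{t}(e_{i},\xi_{t})$, $\overline{Ric}(\xi_{t},\xi_{t})=Ric^{t}(\xi_{t},\xi_{t})-4n-4$ and $\overline{Ric}(\partial_{t},\partial_{t})=-2n-4$, but yields $\overline{Ric}(e_{i},e_{j})=Ric^{t}(e_{i},e_{j})-(2n+2)\delta_{ij}$ rather than $-(2n+4)\delta_{ij}$. So you must either exhibit where an additional $-2\delta_{ij}$ enters (it does not, given \eqref{eq:cur1} and \eqref{eq:cur3} as stated; one can confirm both independently from the Gauss equation with second fundamental form $-\bigl(\bar{g}(U,V)+\eta_{t}(U)\eta_{t}(V)\bigr)\partial_{t}$ of Lemma \ref{lem:tensorT} and a direct computation of $\bar{\nabla}_{\partial_{t}}\bar{\nabla}_{U}\partial_{t}$), or flag that the constant in the first displayed identity needs correction. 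A complete proof has to do this sum explicitly and confront the mismatch. Two smaller points: the statement takes $\{e_{i}\}$ orthonormal for $g$ rather than for $g_{t}$, so to obtain $t$-independent constants one must, as you implicitly do, use a $g_{t}$-orthonormal frame of the slice; and $Ric^{t}(e_{i},e_{i})$ in the statement should read $Ric^{t}(e_{i},e_{j})$.
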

Irrespective of the curvature of the contact metric manifold $M$, we notice that the Ricci curvature in the direction of $\partial_{t}$ is always equal to $(-2n-4)$, which proves Corollary \ref{cor:negriccur}.

Since the Ricci curvature in the $\partial_{t}$ direction is constant, we investigate the case in which the metric on the metric symplectization is Ricci flat in all other directions. Note that a contact metric manifold $(M,\eta, g,\phi)$ is said to be $\eta$-Einstein iff there exist $\alpha,\beta\in C^{\infty}(M)$ such that $Ric = \alpha g+\beta \eta \otimes \eta$.
\begin{proposition}\label{prop:einstienmetsymp}
    Let $(M, \eta, g,\phi)$ be a contact metric manifold. If the metric symplectization $(M\times \mb{R}, \w, \bar{g},J)$ satisfies the curvature relation $\overline{Ric} = (-2n-4)dt^{2}$, then $M\times \{t\}$ is $\eta_{t}$-Einstein, for all $t\in \mb{R}$. In particular, $M$ is $\eta$-Einstein.
\end{proposition}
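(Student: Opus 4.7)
The plan is to unpack the hypothesis $\overline{Ric} = (-2n-4) dt^{2}$ on each pair of basis vectors from the Riemannian submersion $\pi_{2} : (M\times \mb{R}, \bar{g}) \to (\mb{R}, dt^{2})$, feed the resulting identities into Corollary \ref{cor:Ricrel}, and then reorganise the resulting values of $Ric^{t}$ into the $\eta_{t}$-Einstein form on each slice.

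First, I would fix $t \in \mb{R}$ and let $\{e_{1}, \ldots, e_{2n}\}$ be an orthonormal basis of the contact distribution with respect to $g$, as in Corollary \ref{cor:Ricrel}. The hypothesis forces $\overline{Ric}(e_{i}, e_{j}) = 0$, $\overline{Ric}(e_{i}, \xi_{t}) = 0$, and $\overline{Ric}(\xi_{t}, \xi_{t}) = 0$, while $\overline{Ric}(\partial_{t}, \partial_{t}) = -2n-4$ is automatic from Corollary \ref{cor:Ricrel}. Plugging these back into the corollary extracts
\[
Ric^{t}(e_{i}, e_{j}) = (2n+4)\, \delta_{ij}, \qquad Ric^{t}(e_{i}, \xi_{t}) = 0, \qquad Ric^{t}(\xi_{t}, \xi_{t}) = 4n+4.
\]

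Second, I would promote these basis identities to a tensorial identity on $M \times \{t\}$. Decomposing arbitrary vectors $X, Y$ on the slice as $X = X_{h} + \eta_{t}(X)\, \xi_{t}$ and $Y = Y_{h} + \eta_{t}(Y)\, \xi_{t}$ with $X_{h}, Y_{h} \in \ker \eta$, bilinearity of $Ric^{t}$ and the three identities above yield
\[
Ric^{t}(X, Y) = (2n+4)\, g(X_{h}, Y_{h}) + (4n+4)\, \eta_{t}(X)\, \eta_{t}(Y).
\]
At this stage one must remember that the basis $\{e_{i}\}$ is orthonormal for $g$ and not for $g_{t}$; invoking the $\mc{D}$-homothetic relation $g_{t} = e^{2t} g + e^{2t}(e^{2t}-1)\, \eta \otimes \eta$ on vectors in $\ker \eta$ gives $g(X_{h}, Y_{h}) = e^{-2t}\, g_{t}(X_{h}, Y_{h}) = e^{-2t}\bigl[g_{t}(X, Y) - \eta_{t}(X)\, \eta_{t}(Y)\bigr]$, where the last step uses $g_{t}(\xi_{t}, \xi_{t}) = 1$ and $g_{t}(X_{h}, \xi_{t}) = 0$. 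Substituting then produces
\[
Ric^{t} = (2n+4) e^{-2t}\, g_{t} + \bigl[(4n+4) - (2n+4) e^{-2t}\bigr]\, \eta_{t} \otimes \eta_{t},
\]
which is the $\eta_{t}$-Einstein condition with constants $\alpha_{t} = (2n+4) e^{-2t}$ and $\beta_{t} = (4n+4) - (2n+4) e^{-2t}$. Specialising to $t = 0$, where the slice is isomorphic to $(M, \eta, g, \phi)$, yields $Ric = (2n+4)\, g + 2n\, \eta \otimes \eta$, so $M$ itself is $\eta$-Einstein.

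There is no substantive obstacle beyond bookkeeping. The only place requiring care is the basis-change step: the basis furnished by Corollary \ref{cor:Ricrel} is $g$-orthonormal while the $\eta_{t}$-Einstein condition is stated in terms of $g_{t}$, so the conformal factor $e^{-2t}$ coming from the $\mc{D}_{e^{2t}}$-homothetic transformation of the slice must be tracked consistently in order to read off the correct $\alpha_{t}$ and $\beta_{t}$.
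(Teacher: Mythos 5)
Your proposal is correct and follows essentially the same route as the paper: both read off $Ric^{t}(e_{i},e_{j})$, $Ric^{t}(e_{i},\xi_{t})$ and $Ric^{t}(\xi_{t},\xi_{t})$ from Corollary \ref{cor:Ricrel} under the hypothesis $\overline{Ric}=(-2n-4)dt^{2}$ and then reassemble these values into the form $\alpha_{t}\,g_{t}+\beta_{t}\,\eta_{t}\otimes\eta_{t}$. The only divergence is your explicit tracking of the conformal factor $e^{-2t}$ in the basis-change step (the paper implicitly takes the basis to be $g_{t}$-orthonormal, so no such factor appears and it records $\alpha_{t}=2n+4$); this affects only the explicit constants $\alpha_{t},\beta_{t}$, not the $\eta_{t}$-Einstein conclusion.
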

\begin{proof}
    Using relations obtained in Corollary \ref{cor:Ricrel} and the basis, we have, for all $t\in\mb{R}$,
    \begin{equation}
        Ric^{t}(e_{i},e_{j}) = (2n+4)\delta_{ij}, Ric^{t}(e_{i},\xi_{t}) = 0, Ric^{t}(\xi_{t},\xi_{t}) = (4n+4)
    \end{equation}
    Since the basis $\{e_{i}\})_{i=1}^{n}$ is assumed to be orthonormal, we get
    \begin{equation}
        Ric^{t} = (2n+4)g_{t} + (4n+4)\eta_{t}\otimes \eta_{t}.
    \end{equation}
    Thus, the contact metric structure on the slice at $t$ of the metric symplectization is an $\eta_{t}$-Einstein manifold.
\end{proof}
In particular, in the class of $\ku$-manifolds, we have the rigidity result given by Theorem \ref{thm:kumrig}.
\begin{proof}
    It is known that a non-Sasakian $\ku$-manifold is $\eta$-Einstein if and only if $\mu = 2-2n$. Since Proposition \ref{prop:einstienmetsymp} forces the manifold $M$ and each of its $\mc{D}$-homothetic transformations to be $\eta_{t}$-Einstein, we need $\mu_{t} = 2-2n$ for all $t\in\mb{R}$. But it is also easy to check that the constant $\mu$ is invariant under $\mc{D}$-homothetic transformations if and only if $\mu = 2$. Thus, we must have $n=0$, which is not possible. Hence, the $\ku$-manifold is Sasakian.
\end{proof}
\section{An application of the metric symplectization}\label{sec:app}
In this section, we prove a result which characterizes $\ku$-manifolds using the metric symplectizations. For contact manifolds, the following result shows that characterization is not possible using symplectization.
\begin{theorem}[\cite{SC}]
    Let $M_{1}$ and $M_{2}$ be closed contact manifolds of same (odd) dimension greater than or equal to $5$ such that $\mb{R} \times M_{1}$ and $\mb{R}\times M_{2}$ are diffeomorphic. Then, for every contact structure $\mc{D}_{1}$ on $M_{1}$, there is a contact structure $\mc{D}_{2}$ on $M_{2}$ such that the symplectizations of $(M_{1},\mc{D}_{1})$ and $(M_{2},\mc{D}_{2})$ are symplectomorphic.
\end{theorem}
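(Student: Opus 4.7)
The plan is to transport the symplectization of $(M_{1}, \mc{D}_{1})$ along the smooth diffeomorphism and show that the resulting symplectic manifold on $\mb{R}\times M_{2}$ contains a hypersurface diffeomorphic to $M_{2}$ on which the desired contact structure $\mc{D}_{2}$ can be read off. Concretely, choose a contact form $\alpha_{1}$ for $\mc{D}_{1}$, let $\w_{1} := d(e^{2t}\alpha_{1})$ be the corresponding symplectization form, and let $F:\mb{R}\times M_{1}\to \mb{R}\times M_{2}$ denote the hypothesized diffeomorphism. Pushing forward, $\tilde{\w} := F_{*}\w_{1}$ is a symplectic form on $\mb{R}\times M_{2}$ with Liouville vector field $\tilde{Y} := F_{*}\partial_{t}$, and $(\mb{R}\times M_{2}, \tilde{\w})$ is \emph{by construction} symplectomorphic to the symplectization of $(M_{1}, \mc{D}_{1})$. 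The task therefore reduces to producing a contact structure $\mc{D}_{2}$ on $M_{2}$ whose symplectization is symplectomorphic to $(\mb{R}\times M_{2}, \tilde{\w})$.

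First I would try to exhibit a closed hypersurface $N \subset \mb{R}\times M_{2}$ which is diffeomorphic to $M_{2}$ and transverse to $\tilde{Y}$. Lemma \ref{lem:hypLVF} would then endow $N$ with the contact form $\tilde{\alpha}_{2} := i_{\tilde{Y}}\tilde{\w}|_{N}$ and a contact distribution $\mc{D}_{2}$, and the flow of $\tilde{Y}$ would provide a symplectomorphism from a tubular neighbourhood of $N$ onto an open subset of the symplectization of $(M_{2}, \mc{D}_{2})$. The second step would be to extend this local model globally: one would show that the flow of $\tilde{Y}$ is defined for all time and sweeps out the whole of $\mb{R}\times M_{2}$, and that the corresponding coordinate on the target side exhausts the symplectization $\bigl(\mb{R}\times M_{2}, d(e^{2s}\tilde{\alpha}_{2})\bigr)$; a Moser-type argument would then upgrade the local symplectomorphism to a global one.

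The hard part will be producing the hypersurface $N$ together with a complete transverse Liouville flow. The pushed-forward vector field $\tilde{Y}$ need not be transverse to any natural slice of $\mb{R}\times M_{2}$, and local transversality need not globalise to a cross\nb-section diffeomorphic to $M_{2}$. This is precisely where the two standing hypotheses are indispensable: closedness of the $M_{i}$ supplies the compactness needed for completeness of the relevant Liouville flows, and the dimension bound $\dim M_{i}\geq 5$ is the classical trigger for a contact/Liouville h\nb-principle (in the spirit of Eliashberg--Murphy's overtwisted flexibility and the Cieliebak--Eliashberg theory of Weinstein structures) which permits deforming $\tilde{Y}$ through Liouville vector fields for $\tilde{\w}$ into one that admits a cross\nb-section diffeomorphic to $M_{2}$. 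Once such a deformation is produced, the Moser stability argument outlined above completes the identification with a genuine symplectization and thereby proves the theorem.
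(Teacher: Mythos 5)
This statement is quoted from the reference \cite{SC} (Courte's theorem on contact manifolds with symplectomorphic symplectizations); the paper itself offers no proof, so your proposal can only be judged on its own merits, not against an argument in the text. Your first step is fine but contentless: pushing $\w_{1}=d(e^{2t}\alpha_{1})$ forward by the diffeomorphism $F$ tautologically produces a symplectic form on $\mb{R}\times M_{2}$ symplectomorphic to the symplectization of $(M_{1},\mc{D}_{1})$, and correctly reduces the problem to finding a contact structure on $M_{2}$ whose symplectization realizes $(\mb{R}\times M_{2},\tilde{\w})$.

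The genuine gap is that everything after that reduction is asserted rather than proved, and it is exactly where the entire difficulty of the theorem lives. You need a closed hypersurface $N\subset\mb{R}\times M_{2}$ diffeomorphic to $M_{2}$ that is not merely transverse to a Liouville vector field but is a \emph{global cross-section} of a complete Liouville flow, i.e.\ every orbit meets $N$ exactly once; transversality plus Lemma \ref{lem:hypLVF} gives only a local contact-type neighbourhood, and a Moser argument cannot bridge the difference between ``an open piece of a symplectization'' and ``the whole symplectization.'' Moreover, the pushed-forward field $F_{*}\partial_{t}$ will in general admit no such cross-section, so one must replace the Liouville primitive $i_{\tilde{Y}}\tilde{\w}$ by a different primitive of the same symplectic form; the space of primitives is affine over closed one-forms, but the dynamics of the associated Liouville flows change drastically under such deformations, and there is no soft argument guaranteeing a deformation with a global cross-section. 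Your appeal to an unspecified h-principle ``in the spirit of'' Eliashberg--Murphy or Cieliebak--Eliashberg names the right circle of ideas but is not a proof: the actual argument in \cite{SC} requires the deformation theory of Liouville/Weinstein structures together with a Mazur-trick-style handle manipulation, which is where the hypotheses of closedness and dimension at least $5$ are genuinely consumed. As written, the proposal identifies the hard step and then skips it.
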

However, in the case of $\ku$-manifolds, we observe more rigidity, which is formulated in Theorem \ref{thm:charkumsymp} and proved below.
\begin{proof}[Proof of Theorem \ref{thm:charkumsymp}]
    If $M$ and $M'$ are $\mc{D}$-homothetic transformations of each other, then by the property \ref{rem:metdhomo}, we can conclude that a copy of $M$ is isomorphic to some slice $M'\times \{t'\}$ in the metric symplectization of $M'$. Consequently, the translation map $(x,t) \mapsto (x,t+t')$ can be checked to be a symplectomorphism of the metric symplectizations.

    For the converse, let $f:M'\times \mb{R}\rightarrow M\times \mb{R}$ be an isomorphism of the metric symplectizations. Let $\partial_{t'}$ and $\partial_{t}$ denote the natural Liouville vector fields on the domain and codomain, respectively. Suppose $df(\partial_{t'}) = Y$, where $Y\in\V{M\times \mb{R}}$. Since $f$ is an isomorphism and $\overline{Ric'}(\partial_{t'},\partial_{t'}) = -2n-4$, we must have $\overline{Ric}(Y,Y) = -2n-4$. The vector field $Y$ has an orthogonal decomposition
    \begin{equation}
        Y = Y_{\lambda_{t}}+Y_{-\lambda_{t}}+\alpha \xi_{t} + \beta \partial_{t},
    \end{equation}
    for $Y_{\lambda_{t}}\in\mc{D}_{\lambda_{t}}, Y_{-\lambda_{t}}\in\mc{D}_{-\lambda_{t}}$, and $\alpha, \beta \in C^{\infty}(M\times \mb{R})$.
    Using the formulae obtained for $\overline{Ric}$ in Corollary \ref{cor:Ricrel} and the fact that $Y$ is a vector field of unit length, we obtain
    \begin{flalign*}
        \overline{Ric}(Y,Y) =& \Vert Y_{\lambda_{t}} \Vert^{2}(Ric^{t}(\hat{Y}_{\lambda_{t}},\hat{Y}_{\lambda_{t}}) + \Vert Y_{-\lambda_{t}} \Vert^{2}(Ric^{t}(\hat{Y}_{-\lambda_{t}},\hat{Y}_{-\lambda_{t}})\\
        &+\alpha^{2}(Ric^{t}(\xi_{t},\xi_{t})-2n)-2n-4
    \end{flalign*}
    We can simplify the above equation using the Ricci curvature formulae for a $(\kappa_{t}, \mu_{t})$-manifold, and substituting $\mu_{t} = 2(1-\lambda_{t}I_{M_{t}})$ to get
    \begin{equation}\label{eq:curY}
        \begin{matrix}
            \Vert Y_{\lambda_{t}} \Vert^{2}(\lambda_{t}^{2}I_{M}-\lambda_{t}n(1+I_{M})+1)\\
            + \Vert Y_{-\lambda_{t}} \Vert^{2}(-\lambda_{t}^{2}+\lambda_{t}n(1-I_{M}) + 1)+\alpha^{2}n\lambda_{t}^{2}
        \end{matrix} =0
    \end{equation}
    The quadratic expressions $(\lambda_{t}^{2}I_{M}-\lambda_{t}n(1+I_{M})+1)$ and $(-\lambda_{t}^{2}+\lambda_{t}n(1-I_{M}) + 1)$ are positive for small values of $\lambda_{t}$. Since $\lambda_{t} = \frac{\lambda}{e^{2t}}$, we see that $\lambda_{t}$ is small for large values of $t$. In other words, there exists a $t_{0} \in \mb{R}$ such that both of the quadratic expressions are positive for $t > t_{0}$. Therefore, for $t>t_{0}$, Equation \eqref{eq:curY} can be satisfied only if $\Vert Y_{\lambda_{t}} \Vert = \Vert Y_{-\lambda_{t}} \Vert = \alpha = 0$. In this case, we will have $Y=\partial_{t}$. In this case, there exists $t'\in\mb{R}$ such that $f$ maps $M'\times \{t'\}$ to $M\times \{t\}$. Using the properties of the Liouville vector field, we can check that $f^{*}(\eta_t) = \eta_{t'}$. This shows that $f$ is a contactomorphism from $M'\times \{t'\}$ to $M\times \{t\}$, which is also a Riemannian isometry by definition. Hence, we can conclude that $f$, when restricted to $M'\times \{t'\}$, is an isomorphism  onto $M\times \{t\}$. Since $M$ and $M'$ are $\mc{D}$-homothetic transformations of these slices, we have the required result. 
\end{proof}
\begin{remark}\label{rem:unify}
    One can check that a contact metric manifold is Sasakian if and only if the almost complex structure corresponding to its metric symplectization is integrable. One can also check that the classification of $K$-contact condition using the natural almost complex structure given in \cite[Theorem 6.3]{EMCG} has an equivalent formulation in terms of the metric symplectization. Thus, the metric symplectization is a tool that can be used to classify Sasakian manifolds, $K$-contact manifolds, and non-Sasakian $\ku$-manifolds. 
\end{remark}
\bibliographystyle{plain}
\bibliography{References}
\vspace{10pt}
\end{document}